\documentclass[12pt]{article}

\usepackage{amsmath,     %
            amssymb,     %
            amsthm,      %
            enumerate,   %
            epsfig,      %
            graphics,    %
            indentfirst, %
           makeidx
  }    %

\usepackage[T2A]{fontenc}
\usepackage{amsmath}
\usepackage{xcolor}

 \textwidth 6.8in \hoffset=-.85in \textheight=8.8in \voffset=-.8in

\newtheorem{theorem}{Theorem}[section]          %
\newtheorem{lemma}{Lemma}[section]              %
\newtheorem{remark}{Remark}[section]            %
  %


\def\OL#1{\overline {#1}}

\def\UL#1{\underline{#1}}

\let\wt\widetilde

\let\bo\partial

\def\N{\newline}

\def\7{\mathaccent"7017}

\def\q{\quad}

\def\Q{\qquad}

\newcommand{\bi}[1]{\mbox{\boldmath $ {#1} $}} 

\newcommand{\mbb} {\mathbb}

\newcommand{\mc}[1]{\mbox{$\mathcal {#1}$}}

\newcommand{\mr} {\mathrm}

\newcommand {\mbf} {\mathbf}

\newcommand{\vertiii}[1]{{\left\vert\kern-0.25ex\left\vert\kern-0.25ex\left\vert #1
    \right\vert\kern-0.25ex\right\vert\kern-0.25ex\right\vert}}

\begin{document}

\title{On the error control  at numerical solution\\  of reaction-difusion equations
\footnote{ The paper is supported the  RFBR N 15-01-08847а.}
     }
   \author{  V. Korneev}

 \maketitle

\begin{center}
\small{ St. Petersburg State  University,
         Russia } \\
 \vskip 3mm
  \footnotesize{\tt Vad.Korneev2011@yandex.ru
  }
 \end{center}

 \begin{abstract}
 We suggest guaranteed, robust a posteriori error bounds for approximate solutions of the reaction-diffusion equations, modeled by the equation $-\Delta u+\sigma u= f$ in $\Omega$ with any $\sigma={\mathrm{const}}\ge 0$. We also term our bounds consistent due to one specific property. It assumes that their orders of accuracy in respect to  mesh size $h$ are the same with the respective not improvable in the order a priori bounds. Additionally, it assumes that the pointed out equality of the orders is provided by the testing flaxes not subjected to equilibration. For any $\sigma\in [0,\sigma_*]$, the rirght part of the new general bound of the paper contains, besides the usual diffusion term, the $L_2$ norm of the residual with the factor $1/\sqrt{\sigma_*}$, where $\sigma_*$ is some critical value. For the solutions by the finite element method, it is estimated as $\sigma_*\ge ch^{-2},\,\,c={\mathrm{const}}$, if $\partial \Omega$ is sufficiently smooth and the finite element space is of the 1$^{\mathrm{st}}$ order of accuracy at least. In general, at the derivation of a posteriori bounds, consistency is achieved by taking adequately into account the difference of the orders of the $L_2$ and $H^1$ error norms, that can be done in various ways with accordingly introduced $\sigma_*$. Two advantages of the obtained consistent posteriori error bounds deserve attention.  They are better accuracy and the possibility to avoid the use of the equilibration in the flax recovery procedures, that may greatly simplify these procedures and make them much more universal. The technique of obtaining the consistent a posteriori bounds was briefly exposed by the author in [{\em arXiv:1702.00433v1 [math.NA]}, 1 Feb 2017] and [$Doklady Mathematics$, ${\mathbf{96}}$ (1), 2017, 380-383].
 \end{abstract}

 \section{\normalsize Introduction}\label{Se:In}
 \setcounter{equation}{0}

 \par For the successful  error control of approximate solutions to the boundary value problems, the guaranteed a posteriori error majorant must be sufficiently accurate and cheap in a sense of the computational work.  In particular,  it is natural to expect that
 the computational cost of an  error estimate does not exceed the cost of the numerical solution of the problem.
 For the elliptic equations of the second order, most often the  FEM's (finite element methods) of the  class $C$ are used with the solutions belonging to finite-dimensional subspaces of $C(\Omega)\cap H^1(\Omega)$. The second derivatives of such approximate solutions, which are needed to calculate the residuals, are not defined. Therefore, the majorants of the error are calculated by using more smooth
 testing flows, which are found with the help of FEM flows by means  of special procedures, termed \textit{flax recovery procedures}.  They significantly influence  the accuracy of a posteriori error bounds.
\par  The need to improve the smoothness of numerical flows  without losses in  accuracy motivated the development of the flax recovery procedures, which currently are  numerous.
 There are several types of such procedures attempting as the reduction of the
  right parts of the error bounds, so
 producing the flaxes, equilibrated in a weak or a strong form,  and, for the result, allowing to reduce the coefficients before the residuals or even to remove them from the bounds.  For making  the equilibration simpler, it is usually done  not for the right part $f$ of the elliptic equation, but for its approximation $\hat{f}$, which on every finite element is defined, {\em e.g.},  as the orthogonal $L_2$-projection  on  the space of the finite element.
Considerable attention was given to procedures in which the evaluation of the smoothed and equilibrated flaxes was reduced to the resolving of auxiliary local problems of approximation/interpolation.
    Among great number of works devoted to the mentioned topics, we are able to refer to a few, in particular, to
  Zienkiewicz and Zhu \cite{ZienkiewiczZhu:92}, Ainsworth and Oden \cite{AinsworthOden:00}, Babuska \textit{et al.} \cite{BabuskaStrouboulis:01, BabuskaWhitemanStrouboulis:11}, Ern \textit{et al.} \cite{ErnStephansenVohralik:2009}, Cheddadi \textit{et al.}  \cite{CheddadiRadekFucikPrietoVohralik:2009}, Kai and Zhang \cite{Cai and S. Zhang :2010} and Ainsworth and Vejchodsky \cite{AinsworthVejchodsky:2015}, where additional extensife bibliography  can be found.

  \par At the creation of  flax recovery procedures, three requirements are paid attention :
   \par ${\bi \alpha}$) preserving orders of accuracy, \textit{e.g.}, in the  energy and other norms,  the same as the numerical flaxes determined by the approximate solution of the boundary value problem,
\par ${\bi \beta}$) obtaining the balanced or weakly balanced recovered flaxes, which element wise provide  smallness of the residual type terms  or make them equal to zero,
\par ${\bi\gamma}$) providing linear or almost linear computational cost.
 \N Obviously, the requirement  ${\bi\beta}$) complicates the procedures, makes them  directly dependent on the specific boundary value problem to be solved and, therefore, less universal.  In a number of  the a posteriori  error majorants  the orders of smallness of the   residual type component and of the others, evidently, differ.  The implementation of ${\bi\beta}$) gives the possibility to reduce the order of the first  component and make it comparable  with the error of the approximate solution. A consistent and efficient implementation of this approach is found   in  Ainsworth and Oden \cite{AinsworthOden:00}  and        Ainsworth and Vejchodsky
\cite{AinsworthVejchodsky:2015,  AinsworthVejchodsky:2011}.

\par In the present work, we explore the possibility of obtaining the  a posteriori  error majorants all terms of which would have the same order of smallness  with respect to the mesh size $h$ independently of the equilibration. In this relation,  we use the term
\textit{consistent a posteriori error estimate/majorant}  attributed to
  the a posteriori estimates/majorants, possessing the following property:
  on the test flaxes, that meet the requirements  ${\bi \alpha}$), ${\bi \gamma}$), they provide
the same  orders of accuracy  with the corresponding a priori
 error estimates.
Obviously, consistent majorant is unimprovable
in the  order of accuracy, if the  a priori error estimate
of the numerical method  is unimprovable in the same sense.\footnote{In the literature,  the term "consistency"\!\, is
often understood in a weaker sense  as turning  majorant in zero upon  substitution of the exact solution in it.}

\par The typical summand of the majorants of the error energy norms is  $\theta^{1/2}\|\hat{f}-\sigma u_h+ {\bf z}\|_{L_2(\Omega)}$, where $\sigma$ is the nonnegative constant reaction coefficient in the equation of reaction-diffusion,  $u_h$ is the approximate solution, and ${\bf z}$ is the testing flax vector function.  Different authors come to  different expressions for $\theta$ and, in particular, to
 {\em a})   $\theta=1/\sqrt{\sigma}$ for all $\sigma > 0$,   {\em b})  $\theta=$const  for $\sigma \equiv 0$,   {\em c})  $\theta=(ch)^2$  for $\sigma \le (ch)^{-2}$,
and  {\em d}) $\theta=0$  for $\sigma \le (ch)^{-2}$.  Several examples of majorants of these types are given in  the next section. It is not difficult to come to the conclusion that in the cases {\em a}) and {\em b}) the a posteriory bounds can be larger  in $h^{-1}$  times,  and more in the case  {\em a}),   than the energy norm of the error, if the test flaxes  satisfy only the requirements  ${\bi\alpha}$) and ${\bi\gamma}$).  This is a consequence of the fact that these bounds are nonconsistent.
The orders of  smallness of a posteriori majorants, related to {\em c}) and {\em d}), are equal to the orders provided by the corresponding a priori trror bounds. At least this is true for the methods with the linear thetrahedral finite elements and the problems with the exact solutions belonging to $H^2(\Omega)$.  However, in
 a strict sense, the aposteriory bounds
related to {\em c}) and {\em d}) can be also inconsistent when they are based,  as it  happens most often, on the use of the equilibrated testing flaxes.  In other words, they can produce not satisfactory bounds, if testing flaxes satisfy ${\bi\alpha}$) and ${\bi\gamma}$), but are not equilibrated.

  \par There exists
  other option to improve the accuracy of a posteriori error majorants and  at the same time to simplify the procedures of flax recovery.
  Such an option is provided by the consistent a posteriori error bounds which are derived in this paper,
   following techniques, briefly presented in \cite{Korneev:2016, Korneev:2017-1, Korneev:2017-2}.
  The above mentioned $L_2$-norms of the residual type enter right parts of these bounds with the  multiplier  $\theta^{1/2}=ch $, as in the case {\em c}), but without the assumption of   the testing flax equilibration.  Therefore,  not only  the accuracy of the a posteriori bounds is improved, but  simultaneously the flax recovery procedures can be noticeably simplified.

  \par It is worth noting that the structure of modern a posteriori error bounds for approximations of the solutions of the reaction-difusion equations
   is met in the works of Aubin, see \cite{Aubin:72}. His majorant corresponds to the case {\em a}), does not assume the equilibration and
    is accurate for sufficiently big values of $\sigma$. However, its accuracy drops when
 $\sigma\rightarrow 0$ and at $\sigma = 0$ it losses it sense. A number of later majorants, having similar structures,
 resulted from the attempts to improve accuracy by different remedies. In order to come to our bounds, we use the  new way of  their derivation, which adequately  takes  into account the difference in the orders of   $L_2$  and $H^1$  error norms  of   approximate solutions.
The majorants proposed in this work are  defined for all $\sigma \ge 0$, for  $\sigma \ge (ch)^{-2}$ coincide with the majorant of Aubin, do not lose precision for $ \sigma \in[ 0, ch^{-2}],\,\,c ={\mr{const}}$, and are consistent.

\par  Simplicity of evaluation of  constants in a posteriory error majorants is very important for the practice.  In the paper, we suggest   consistent majorants of two types with differently defined constants. In one of them, the constants depend on the constants in local
bounds of approximation in $L_2(\tau_r)$ and stability in $H^1(\delta^{(r)})$ of the quasiinterpolation operator $H^1(\delta^{(r)})\rightarrow {\mbb V}_h(\tau_r)$, where ${\mbb V}_h(\tau_r)$ is the space induced by the finite element $\tau_r$, $\delta^{(r)}$ is the smallest patch of
the finite elements neighbouring $\tau_r$. In particular, the quasiinterpolation operator of Scott and Zhang \cite{ScottZhang:90}  can be used. As was mentioned, consistency with  a priori error estimates, unimprovable in the order of accuracy, implies that a posteriori error majorant is accurate in the same sense. The exactness in the order of the majorant can be also  confirmed by the inverse
 bounds and by the so called bounds of local effectiveness. Due to the discussed above properties of  our majorants, they are majorated by some known majorants and,  as a consequence, some known inverse bounds can be easily adapted to our majorants. We consider an example of such bound.

\par The paper is organized as follows. Section~\ref{Se:Examples}
 contains the
formulation of the  boundary value problem of reaction-diffusion, examples of known error majorants, similar in structure to the ones suggested in the paper, and their brief discussion from the point of view of consistency.
 In Section~\ref{Se:consist}, we propose the new general a posteriori majorant for the error of
 approximation of the exact solution to the boundary value problem by an arbitrary sufficiently smooth function $v$ that satisfies the essential  boundary conditions.
  It is  defined for $\forall \sigma \ge 0$  and coincides
  with the majorant of Aubin \cite{Aubin:72}
  in the case of $\sigma$,
 exceeding a certain critical value $\sigma_*$. Therefore, it can be considered as the improved Aubin's majorant.
   Several versions of the  majorant  are discussed, related to
different ways of defining $\sigma_*$ and, respectively, coefficients of the majorant. The easiest one corresponds to the majorant applicable to the approximate solutions $v$ by Galerkin method with coordinate functions belonging to the space $ H^2(\Omega)$. This means that
it is directly applicable in the isogeometric analysis, see Cottrel etc. \cite{CottrellHughesBazilevs:2009}, which makes use of coordinate functions of a higher smoothness. Majorants of Section~~\ref{Se:consist}  are quite general, they do not address  properties of the mesh methods.

\par Consistent a posteriori error estimates for  solutions  by the finite element method   are presented in Sections~\ref{Se:FEM} and \ref{Se:cons-inverse}. Theorems~\ref{Th: Kor} and \ref{Th: Kor+1},  proved in Section~\ref{Se:FEM} suggest different approaches  to evaluation of constants.
  In the majorant of Theorem~\ref{Th: Kor+1} they are expressed through the constants in the estimates of approximation by means of the quasiinterpolation projection operator of Scott and Zhang \cite{ScottZhang:90}.  In Section~\ref{Se:cons-inverse},  some properties of new  a posteriori estimators are discussed,  their consistency with the  known  unimprovable a priori error estimates  is shown and supported  by the derivation of an  inverse like bound.

\par In what follows $\left\| \phi \right\| _{H^{k} ({\mathcal G})} $ is the norm in the Sobolev space $H^{k} ( {\mathcal G})$ on the domain $ {\mathcal G}$
\[\left\| \phi \right\| _{H^{k} ({\mathcal G} )}^{2} =\left\| \phi \right\| _{L_{2} ({\mathcal G})}^{2} +\hspace*{-1 mm} \sum _{l=1}^{k} |\phi |_{H^{l} ({\mathcal G} )}^{2},\q
|\phi |_{H^{l} ({\mathcal G})}^{2}= \hspace*{-1mm} \sum _{|q|=l} \int _{{\mathcal G} }( \frac{\partial ^{l} \phi}{
\partial x_1^{q_1 } \partial x_2^{q_2} \dots \partial x_m^{q_m }} )^{2} dx \, ,
\]
where
$\left\| \phi \right\| _{L_{2} ({\mathcal G})}^{2} =\int _{{\mathcal G} }\phi ^{2} dx$ and $q=(q_1,q_2,\dots, q_m),\,\, q_k\ge 0,\,\, |q|=q_1+q_2+\dots+ q_m$.
If ${\mathcal G}=\Omega$,  for $\|\cdot \|_{L_2 (\Omega)},\,\, \| \cdot\| _{H^{k} (\Omega )}$ and
$| \cdot| _{H^{k} (\Omega )}$ will also be used simpler symbols
$\| \cdot \|_0,\,\,\|\cdot \|_k$ and $|\cdot |_k$, respectively.  If on the entire boundary $\bo \Omega $ or on its part  $\Gamma_D $
the homogeneous Dirichlet boundary  condition is specified, then for the corresponding subspaces of functions from $ H^1(\Omega)$
we use the notations
  $ \7H^1(\Omega):=\{v\in H^1(\Omega):v|_{\bo \Omega}=0 \}$ and $ \7H^1_{\Gamma_D}(\Omega):=\{v\in H^1(\Omega):v|_{\Gamma_D}=0 \}$. We need also the spaces $\7{H}^1(\Omega, \Delta)=\{v\in \7{H}^1(\Omega): \Delta v\in L_2(\Omega)\} $ and  $\7{H}_{\Gamma_D}^1(\Omega, \Delta)=\{v\in \7{H}_{\Gamma_D}^1(\Omega): \Delta v\in L_2(\Omega)\} $.

  \par The  finite element space will be denoted ${\mbb V}_h(\Omega)$ and by definition  $\7{\mbb V}_h(\Omega)=\{ v\in {\mbb V}_h(\Omega): v|_{\bo \Omega}=0 \}$.

\par Everywhere below it is assumed that on $\Omega \subset \mathbb{R}^m,\,\, m=2,3$, the assemblage of compatible and, generally speaking, curvilinear finite elements is given with each finite element occupying  domain
 $\tau_r,\,\, r=1,2,\dots,{\mc R}$. Sometimes, we use the notation ${\mc R}$ also for the set of numbers of finite elements. The finite elements are defined by  sufficiently smooth mappings $x={\mc X}^{(r)}(\xi):\tau_{_\vartriangle}\rightarrow \tau_r$ of the reference  element, defined on the standard triangle or tetrahedron $\tau_{_\vartriangle}$. The span of the coordinate functions of the reference element is the space ${\mc P}_p$ of polynomials of degree $p \in {\mbb N}^+ $.
   If $p>1$, sometimes we use also the notation
  ${\mbb V}_h(\Omega)={\mbb V}_{h,p}(\Omega)$. If  other is not mentioned, it is always assumed that the finite element assemblage satisfies the  \textit{generalized conditions of quasiuniformity} with the mesh parameter $h>0$, which
  can be understood as the maximum of diameters of finite elements.
The generalized conditions of quasiuniformity for the mappings, defining finite elements   (and finite element mesh), as well as  the \textit{generalized shape quasiuniformity  (reqularity)  conditions}  for curvilinear finite elements can be found, for instance,
in Korneev and Langer \cite[Section 3.2]{KorneevLanger:2015}.

\par As a rule in applications   ${\mbb V}_h(\Omega) \subset C(\Omega)\cap H^1(\Omega)$.
At the same time,  in the isogeometric analysis,  more smooth finite dimensional spaces
${\mbb V}_h(\Omega)={\mbb V}_h^1(\Omega) \!\subset\! C^1(\Omega)\!\cap\! H^2(\Omega)$, see Kortell  {\em et al.} \cite{CottrellHughesBazilevs:2009}, are in the use in computational schemes for solving elliptic equations of 2$^{\mr{nd}}$ order. Superscript $l$
 in the notations ${\mbb V}_h^l(\Omega),\,\,{\mbb V}_{h,p}^l(\Omega)$ assumes  inclusion of these spsces in $
 C^l(\Omega)\!\cap\! H^{l+1}(\Omega)$.

\section{\normalsize Model problem, examples of a posteriori error majorants}  \label{Se:Examples}

\par One of the earliest is the a posteriori error majorant of Aubin \cite{Aubin:72}. We illustrate it on the model problem
\begin{equation}\label{poiss}
\begin{array}{c}
{\mathfrak L} u \equiv - {\mr{div }} ({\bf A} {\mr{grad}} \, u) + \sigma u=F(x),\q x=(x_1,x_2,\dots,x_m)\in\Omega \subset {\mbb R}^m\,, \\
u\big|_{\Gamma_D}=\psi_D\,, \q
- {\bf A} \nabla \,u\cdot  {\bi \nu} \big|_{\Gamma_N}=\psi_N\,,
\end{array}
\end{equation}
where $\Gamma_D,\,\,\Gamma_N$ are disjoint,
for simplicity, simply connected
parts of the boundary
 $ \bo \Omega=\OL{\Gamma}_D\cup \OL{\Gamma}_N,\,\,{\mr{mes}}\, \Gamma_D>0$, ${\bi \nu}$ is the internal normal to $ \bo \Omega$, ${\bf A} $ --
symmetric
$m\times m$ matrix that satisfies the inequalities
\begin{equation}\label{A}
\mu_1 {\bi \xi}\cdot  {\bi \xi}\le {\bf A} {\bi \xi}\cdot {\bi \xi}\le \mu_2 {\bi \xi}\cdot  {\bi \xi}\,, \Q 0<\mu_1,\mu_2={\mr{const}},
\end{equation}
for any $x\in\Omega$ and ${\bi \xi}\in {\mbb R}^m$.
The reaction coefficient $\sigma \ge 0$ is assumed to be constant and, in some cases, element wise constant. The boundary of $\Omega$, the coefficients of the matrix ${\bf A} $, and right part $f$ are always considered as sufficiently smooth, in particular, $f\in L_2(\Omega)$, if the requirements on the  smoothness are not formulated differently.
\par Our  primal interest will be the error estimates in the energy norm
\begin{equation}\label{E-norm}
|\!\!\,\! |\!\!\,\!| u|\!\,\!\!|\!\,\!\!|=\left(\|u\|_{\bf A}^2+\sigma \|u\|_{L_2(\Omega)}^2\right)^{1/2}\,,
 \Q \|u\|_{\bf A}^2=\int_\Omega \nabla u\cdot {\bf A}\nabla u\,.
\end{equation}
For vectors ${\bf y}\in {\mbb R}^m$, we introduce also the spaces ${\bf L}^2(\Omega)=(L_2(\Omega))^m$, $ {\bf H}(\Omega, {\mr{div}})=\{ {\bf y}\in {\bf L}^2(\Omega): {\mr{div}}\, {\bf y} \in L_2(\Omega)\}$,   ${\mbf W}_{h,p}(\Omega,{\mr{div}}) =\{{\bf y}\in {\bf H}(\Omega, {\mr{div}}): y_k\big|_{\tau_r}\in {\mc P}_p,\,\, \forall r\in {\mc R},\,\,k=1,2,\dots,m  \}$ and the norm
$]\!| {\bf y} |\![_{{\bf A}^{-1}}=(\int_\Omega {\bf A}^{-1} {\bf y}\cdot {\bf y})^{1/2}$.
\begin{theorem}\label{Th:Aubin-1}
 Let $f\in L_2(\Omega)$, $0< \sigma={\mr{const}} $, $\psi_D\in H^1(\Omega),\,\,\psi_N\in L_2(\Gamma_N)$, $v$ be any function of
$H^1(\Omega)$ that satisfies the boundary condition on $\Gamma_D$. Then, for any
${\bf z} \in {\bf H}(\Omega, {\mr{div}})$, satisfying on ${\Gamma_N}$
the boundary condition $ {\bf z}\cdot {\bi \nu}
=\psi_N$, we have
\begin{equation}\label{Aubin-1}
 |\!\!\,\!|\!\!\,\!| v-u|\!\,\!\!|\!\,\!\!|^2\le ]\!| {\bf A}\nabla\,v +{\bf z} |\![_{{\bf A}^{-1}}^2+
\frac{1}{\sigma}\|f-\sigma v - {\mr{div}} \,{\bf z}\|_{L_2(\Omega)}^2\,.
\end{equation}
\end{theorem}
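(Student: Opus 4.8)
The plan is to set $e=v-u$ and to work with the symmetric bilinear form $a(w,\phi)=\int_\Omega \nabla w\cdot{\bf A}\nabla\phi+\sigma\int_\Omega w\phi$, so that by \eqref{E-norm} the quantity to be bounded is exactly $\vertiii{v-u}^2=a(e,e)$. Since $v$ and the exact solution $u$ both equal $\psi_D$ on $\Gamma_D$, the error $e$ vanishes on $\Gamma_D$ and is an admissible test function. First I would write $a(e,e)=a(v,e)-a(u,e)$ and replace $a(u,e)$ using the weak formulation of \eqref{poiss}: testing the equation against $e$ and integrating by parts (with outward normal $-{\bi\nu}$, so that the natural condition $-{\bf A}\nabla u\cdot{\bi\nu}=\psi_N$ enters) gives $a(u,e)=\int_\Omega f e+\int_{\Gamma_N}\psi_N e$. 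Hence
\[
\vertiii{e}^2=\int_\Omega {\bf A}\nabla v\cdot\nabla e+\sigma\int_\Omega v e-\int_\Omega f e-\int_{\Gamma_N}\psi_N e .
\]

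Next I would bring in the testing flux ${\bf z}$ to eliminate the boundary integral. Because ${\bf z}\in{\bf H}(\Omega,{\mr{div}})$ and $e$ vanishes on $\Gamma_D$, integration by parts yields $\int_\Omega {\bf z}\cdot\nabla e=-\int_\Omega({\mr{div}}\,{\bf z})e-\int_{\Gamma_N}({\bf z}\cdot{\bi\nu})e$, and the prescribed condition ${\bf z}\cdot{\bi\nu}=\psi_N$ on $\Gamma_N$ makes the two $\Gamma_N$-contributions cancel exactly. Substituting $\int_{\Gamma_N}\psi_N e=-\int_\Omega({\mr{div}}\,{\bf z})e-\int_\Omega {\bf z}\cdot\nabla e$ into the identity above collapses everything to the clean volume identity
\[
\vertiii{e}^2=\int_\Omega ({\bf A}\nabla v+{\bf z})\cdot\nabla e-\int_\Omega (f-\sigma v-{\mr{div}}\,{\bf z})\,e .
\]

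Finally I would estimate the two volume terms by Cauchy--Schwarz, with the weighting chosen so that each factor reproduces a piece of the energy norm. Pairing ${\bf A}^{-1/2}({\bf A}\nabla v+{\bf z})$ with ${\bf A}^{1/2}\nabla e$ bounds the first term by $]\!|{\bf A}\nabla v+{\bf z}|\![_{{\bf A}^{-1}}\,\|e\|_{\bf A}$; for the second term I split off $1/\sqrt\sigma$ and write $\|f-\sigma v-{\mr{div}}\,{\bf z}\|_{L_2}\|e\|_{L_2}=\big(\sigma^{-1/2}\|f-\sigma v-{\mr{div}}\,{\bf z}\|_{L_2}\big)\big(\sqrt\sigma\|e\|_{L_2}\big)$. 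Applying the two-term Cauchy--Schwarz inequality to the sum then factors out $\big(\|e\|_{\bf A}^2+\sigma\|e\|_{L_2}^2\big)^{1/2}=\vertiii{e}$, and dividing by $\vertiii{e}$ (the case $\vertiii{e}=0$ being trivial) and squaring produces exactly \eqref{Aubin-1}.

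The step requiring most care is the bookkeeping of the $\Gamma_N$ boundary terms: one must track the internal-normal convention consistently through both integrations by parts so that the $\psi_N$-contribution from the natural boundary condition of $u$ cancels the one coming from ${\bf z}\cdot{\bi\nu}=\psi_N$. The other genuinely structural choice is the weighting in the final Cauchy--Schwarz, namely the split $1=\sqrt\sigma\cdot(1/\sqrt\sigma)$ that pairs the reaction residual with $\sqrt\sigma\|e\|_{L_2}$ and thereby yields the coefficient $1/\sigma$ in \eqref{Aubin-1}; this same factor is the source of the loss of accuracy as $\sigma\to 0$ noted in the introduction, which the consistent majorants of the paper are designed to remedy.
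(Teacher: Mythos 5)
Your proposal is correct. The paper itself does not prove Theorem~\ref{Th:Aubin-1}; it simply cites Aubin's book, so you have supplied the missing self-contained argument, and it is the right one: the identity $\vertiii{e}^2=\int_\Omega({\bf A}\nabla v+{\bf z})\cdot\nabla e-\int_\Omega(f-\sigma v-{\mr{div}}\,{\bf z})\,e$ followed by the weighted two-term Cauchy--Schwarz inequality with the split $1=\sqrt{\sigma}\cdot(1/\sqrt{\sigma})$ is exactly the mechanism the paper later reuses in its own proof of Theorem~\ref{Th: K3} (see the identity \eqref{bound-1}, the elementary inequality \eqref{simple}, and the bound \eqref{K-25}), there with the critical value $\sigma_*$ in place of $\sigma$; your derivation is precisely the specialization $\sigma_*=\sigma$ of that argument, plus the $\Gamma_N$ boundary bookkeeping that the paper suppresses by taking $\Gamma_D=\partial\Omega$ in its proofs. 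Your handling of the normals is consistent with the paper's convention that ${\bi\nu}$ is the \emph{internal} normal (so the outward flux condition reads ${\bf A}\nabla u\cdot{\bf n}=\psi_N$ with ${\bf n}=-{\bi\nu}$, and the two $\Gamma_N$ terms indeed cancel), and the final division by $\vertiii{e}$ with the trivial case $\vertiii{e}=0$ noted separately closes the argument.
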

  \begin{proof}
Estimate (\ref{Aubin-1}) is a special case of the results of Aubin \cite{Aubin:72}, see, e.g., Theorem 22 in Introduction
and Theorems 1.2, 1.4, 1.6 in Chapter 10.
\end{proof}
\par Obviously, if $ \sigma\rightarrow 0$  the majorant of Aubin loses precision and with $ \sigma= 0$ makes no sense.
 \par   If $ \sigma\equiv 0$, one can use the majorant of Repin and Frolov \cite{RepinFrolov:02}. Let
 for simplicity, $\Gamma_D=\bo \Omega$, $\psi_D \equiv 0$, ${\bf A}={\bf I}$, where ${\bf I}$ -- the identity matrix, and $ \sigma\equiv 0$. Then
\begin{equation}\label{rep-frol}
\|\nabla (v - u) \|_{{\bf L}^2(\Omega)}^2 \le (1+\epsilon)\|\nabla v+ {\bf
z} \|_{{\bf L}^2(\Omega)}^2 + c_\Omega(1+\frac{1}{\epsilon})\|\nabla\cdot {\bf z}- {
f}\|_{L_2(\Omega)}^2\,,\q \forall \,\,\epsilon>0\,,
\end{equation}
 where $v$ and ${\bf z}$ are a function and an arbitrary vector-function from $\7H^1(\Omega)$ and ${\bf H}(\Omega, {\mr{div}})$ respectively, and $c_\Omega$ is the constant from the Friedrichs inequality.

          \par It was shown in \cite{AnufrievKorneevKostylev:2006, Korneev:2011} that the correction of arbitrary vector-function ${\bf z} \in {\bf H}(\Omega, {\mr{div}})$ into the vector-function $\bi \tau$, satisfying the balance/equilibrium equations, can be done by quite a few
   rather simple techniques. In particular, it is true for the correction of the flux vector-function $\nabla u_{\mr{fem}}$ into $\bi \tau(u_{\mr{fem}})$.  This allows to implement the a posteriori bound  $|\!\!\,\!|\!\!\,\!| v-u|\!\,\!\!|\!\,\!\!|\le \| {\bf A}\nabla\,u_{\mr{fem}} +\bi \tau(u_{\mr{fem}}) \|_{{\bf A}^{-1}}^2$, see, {\em e.g.,} Mikhlin \cite{Mikhlin:64}., or the bound with the additional free vector-function
     in the right part, which we present below. For simplicity, we restrict considerations to the same homogeneous Dirichlet problem for the Poisson equation in a two-dimensional convex domain.  Let $T_k$ be the projection of the domain $\Omega$ on the axis $x_{3-k}$ and the equations of the left and lower parts of the boundary be $x_k=a_k(x_{3-k}),\,\, x_{3-k}\in T_k$.  If $\beta_k$
    are arbitrary bounded functions  and $\beta_1+\beta_2\equiv 1$,  then according to \cite{AnufrievKorneevKostylev:2006, Korneev:2011}
   \begin{equation}\label{yappi-1}
   \begin{array}{l}
    \|\nabla (v-  u) \|_{ {\bf L}_2(\Omega)} \le
      \|\nabla v + {\bf
    z} \|_{{\bf L}_2(\Omega)} +\\
    \vspace{-3mm}
    \\
      \sum_{k=1,2}\|\int_{a_k(x_{3-k})}^{x_k}
    \beta_k(f-\nabla \cdot{\bf z})
    (\eta_k,x_{3-k})\,d\eta_k \|_{ L_2(\Omega)}\,.
    \end{array}
   \end{equation}
\par In (\ref{yappi-1}) on the right  we have integrals from the residual and this hopefully will make the majorant more accurate.
  Besides there is an additional free function  $\beta_1 $  or $\beta_2 $  and it's  right choice (for instance, with the use of the found approximate solution $v$) can accelerate the process of the minimization of the right part, if such a process is implemented. If to estimate one-dimensional integrals under the sign of the $L_2$-norm, then we come to the bound similar to (\ref{rep-frol})

\par Some authors attempted to modify the majorant of (\ref{Aubin-1}) with the aim of achieving acceptable accuracy for all $ \sigma\ge 0$,
see {\em e.g.}, Repin and Sauter \cite{RepinSauter:2006} and Churilova \cite{Churilova:14}.
The majorant of the latter, defined for $\forall\,\sigma={\mr{const}}\ge 0$, has the form
\begin{equation}\label{Chur}
|\!\!\,\!|\!\!\,\!| v-u|\!\,\!\!|\!\,\!\!|^2\le (1+\epsilon)]\!| {\bf A}\nabla\,v +{\bf z} |\![_{{\bf A}^{-1}}^2+
 \frac{1}{\sigma+\frac{\epsilon}{c_\Omega(1+\epsilon)}}\|f-\sigma v {\mr{div}} \,{\bf z}\|_{L_2(\Omega)}^2\,.
\end{equation}

\par One of the efficient majorants for the finite element solutions was developed by Ainsworth and Vejchodsky \cite{AinsworthVejchodsky:2011, AinsworthVejchodsky:2015}. For its record, we  need additional  notations: $h_r$ is the diameter $\tau_r$, $\Pi_r^p: L_2(\tau_r)\rightarrow \mc P_p(\tau_r)$ is the operator of orthogonal projection  in $L_2(\tau_r)$,
and $\sigma_r={\mr{const}}$ is the value of $\sigma$ on $\tau_r$.
  The dependence of the constants on the data of the boundary value problem and finite element assemblage is much simpler,
  if the following condition is satisfied:
\par ${\mc A}$)  The domain $\Omega$ is a polygon in $ {\mbb R}^m, \,\,m= 2,3$, $\tau_r$ are compatible $m$-dimensional simplices (with flat faces and, respectively, straight edges)
forming triangulation of $\Omega$, satisfying  the conditions of  shape regularity.
  \par For simplicity  in  Theorems  \ref{Th:A-V} and \ref{Th:Ch-Fu-P-Vo}  below,    we additionally  assume $\Gamma_D=\bo \Omega$, $\psi_D \equiv 0$, ${\bf A}={\bf I}$.

   \begin{theorem}\label{Th:A-V}
 Let $u\in \7{H}^1(\Omega)$ be the weak solution of the problem
  and $u_{\mr{fem}} \in \7{\mbb V}(\Omega)$ be the solution by the finite element method.  Then there exists
    ${\bf z}\in {\mbf W}_{h,2}(\Omega,{\mr{div}}) $
       with the following properties:
  \par {\it i)} ${\bf z}$ is evaluated by the  patch wise  numerical procedure of linear numerical complexity,
  \par {\it ii)}    for all $x\in \tau_r$ and $r\in {\mc R}_*=\{r: \sqrt{\sigma}_rh_r< 1\}$  satisfies the equalities
     \begin{equation}\label{AiVe-0}
     \Pi_r^1  f -\sigma_r u_{\mr{fem}}+ {\mr{div}} {\bf z}=0\,,
        \end{equation}
 \par {\it iii)}  for the error $e_{\mr{fem}}=u-u_{\mr{fem}}$ and the error indicator $\eta_{\tau_r}({\bf z})$, defined as
 \begin{equation}\label{yappi-1111}
   \begin{array}{l}
  \eta_{\tau_r}^2({\bf z})=\|{\bf z}-\nabla u_{\mr{fem}}\|^2_{L_2(\tau_r)}, \q \forall r\in {\mc R}_*\,,
  \\
  \vspace{-3mm}
   \\
\eta_{\tau_r}^2({\bf z})=\|{\bf z}-\nabla  u_{\mr{fem}}\|^2_{L_2(\tau_r)} +
\frac{1}{\sigma_r} \|\Pi_K f-\sigma_r u_{\mr{fem}} + \mathrm{div}{\bf z}\|_{L_2(\tau_r)}^2\,,
    \q \forall   r\in {\mc R}\smallsetminus {\mc R}_*\,,
\end{array}
   \end{equation}
 there hold the bounds
\begin{equation}\label{AiVe-1}
 \vertiii{e_{\mr{fem}}}^2\le \sum_{\tau_r\in {\mc{T}}_h} \Big[
\eta_{\tau_r}({\bf z}) + \mathrm{osc}_{\tau_r}(f)\Big]^2\,,
\end{equation}
\begin{equation}\label{AiVe-2}
 \eta_\Omega^2({\bf z})=\sum_{r\in {\mc{R}}}
\eta_{\tau_r}^2({\bf z}) \le {\mbb C}\Big[ \vertiii{e_{\mr{fem}}}^2+ \sum_{r\in {\mc{R}}} \mathrm{osc}_{\tau_r}^2
(f)  \Big]\,,
\end{equation}
where\,    $\mathrm{osc}_{\tau_r}
(f) = \min\left\{\frac{h_r}{\pi},\frac{1}{\sqrt{\sigma_r}}\right\}\|f- \Pi_r^1f\|_{L_2(\tau_r)}$.
\end{theorem}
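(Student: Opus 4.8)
The statement combines a construction of the flux (properties {\it i)}, {\it ii)}) with a two-sided bound (property {\it iii)}), and I would treat these as largely separate tasks: first produce an equilibrated flux by local patch problems, then derive the reliability bound (\ref{AiVe-1}) from a localised Prager--Synge (hypercircle) identity and the efficiency bound (\ref{AiVe-2}) from inverse estimates. For the construction I would take ${\bf z}\in{\mbf W}_{h,2}(\Omega,{\mr{div}})$ as a sum of vertex contributions ${\bf z}=\sum_a{\bf z}_a$, each ${\bf z}_a$ obtained by solving a small mixed problem on the patch of elements around the vertex $a$, driven by the localised residual $\phi_a(\Pi_r^1 f-\sigma_r u_{\mr{fem}})$ with $\phi_a$ the nodal hat function and $\sum_a\phi_a\equiv 1$. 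Because each patch carries $O(1)$ elements and there are $O({\mc R})$ of them, the total cost is linear, which is {\it i)}; matching the divergence patch by patch on the elements of ${\mc R}_*$ arranges the exact equilibrium {\it ii)}.

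For (\ref{AiVe-1}) I would start from the weak form tested with $e_{\mr{fem}}=u-u_{\mr{fem}}\in\7H^1(\Omega)$, which gives
\[
\vertiii{e_{\mr{fem}}}^2=(f-\sigma u_{\mr{fem}},e_{\mr{fem}})-(\nabla u_{\mr{fem}},\nabla e_{\mr{fem}}).
\]
Inserting ${\bf z}$ and integrating by parts element by element (legitimate since $e_{\mr{fem}}|_{\bo\Omega}=0$) turns this into
\[
\vertiii{e_{\mr{fem}}}^2=\sum_r\Big[({\bf z}-\nabla u_{\mr{fem}},\nabla e_{\mr{fem}})_{\tau_r}+(f-\sigma_r u_{\mr{fem}}+{\mr{div}}\,{\bf z},e_{\mr{fem}})_{\tau_r}\Big].
\]
On $r\in{\mc R}_*$ the equilibrium {\it ii)} collapses the residual to $f-\Pi_r^1 f$, which is orthogonal to constants; writing $(f-\Pi_r^1 f,e_{\mr{fem}})_{\tau_r}=(f-\Pi_r^1 f,e_{\mr{fem}}-c_r)_{\tau_r}$ with $c_r$ the mean of $e_{\mr{fem}}$ on $\tau_r$ and using either the Payne--Weinberger inequality $\|e_{\mr{fem}}-c_r\|_{\tau_r}\le(h_r/\pi)\|\nabla e_{\mr{fem}}\|_{\tau_r}$ or the trivial weighting $\|e_{\mr{fem}}\|_{\tau_r}=\sigma_r^{-1/2}\,\sigma_r^{1/2}\|e_{\mr{fem}}\|_{\tau_r}$ produces exactly the factor $\min\{h_r/\pi,\sigma_r^{-1/2}\}$ of $\mathrm{osc}_{\tau_r}(f)$. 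On $r\in{\mc R}\smallsetminus{\mc R}_*$ the residual splits into its equilibrated part, which a two-term Cauchy--Schwarz combines with the flux term into $\eta_{\tau_r}\vertiii{e_{\mr{fem}}}_{\tau_r}$, and the oscillation $f-\Pi_r^1 f$. In every element the contribution is thus $\le(\eta_{\tau_r}+\mathrm{osc}_{\tau_r}(f))\vertiii{e_{\mr{fem}}}_{\tau_r}$; a discrete Cauchy--Schwarz over $r$ and division by $\vertiii{e_{\mr{fem}}}$ give (\ref{AiVe-1}).

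For the efficiency bound (\ref{AiVe-2}) I would argue in the reverse direction, bounding each $\eta_{\tau_r}$ by the local energy error and oscillation. The flux discrepancy $\|{\bf z}-\nabla u_{\mr{fem}}\|_{\tau_r}$ is controlled by a local stability estimate for the reconstruction, which expresses it through the element and face residuals; those are in turn bounded by $\vertiii{e_{\mr{fem}}}_{\delta^{(r)}}$ on the element patch via the standard bubble-function technique, together with the oscillation. The high-reaction term $\sigma_r^{-1/2}\|\Pi_r^1 f-\sigma_r u_{\mr{fem}}+{\mr{div}}\,{\bf z}\|_{\tau_r}$ of $\eta_{\tau_r}$ is handled the same way, the weight $\sigma_r^{-1/2}$ being exactly what keeps the two sides balanced. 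Summation over $r$, using that the patches $\delta^{(r)}$ overlap only finitely, yields (\ref{AiVe-2}) with a constant ${\mbb C}$ depending only on the shape regularity.

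I expect the main obstacle to be robustness: forcing ${\mbb C}$ in (\ref{AiVe-2}) to be independent of $\sigma$ over the whole range $\sigma\ge 0$, and in particular uniformly across the threshold $\sqrt{\sigma_r}h_r=1$ separating ${\mc R}_*$ from ${\mc R}\smallsetminus{\mc R}_*$. This requires the local reconstruction to be stable uniformly in $\sigma_r$ and the bubble-function inverse estimates to reproduce the $\min\{h_r/\pi,\sigma_r^{-1/2}\}$ scaling; it is precisely this balance that the dichotomy built into the estimator (\ref{yappi-1111})---impose equilibrium when $\sqrt{\sigma_r}h_r<1$, retain a weighted residual otherwise---is designed to deliver.
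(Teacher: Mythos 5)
The paper does not actually prove this theorem: its ``proof'' consists of the single sentence referring the reader to Ainsworth and Vejchodsky \cite{AinsworthVejchodsky:2015}, where the result is established (in greater generality). So there is no internal argument to compare yours against line by line; what can be said is that your outline reproduces the broad strategy of that cited work --- an equilibrated flux assembled from vertex-patch mixed problems weighted by a partition of unity, a localised Prager--Synge/residual identity for the reliability bound (\ref{AiVe-1}) with the Payne--Weinberger constant $h_r/\pi$ versus $\sigma_r^{-1/2}$ giving the oscillation weight, and bubble-function inverse estimates for the efficiency bound (\ref{AiVe-2}).

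As a self-contained proof, however, your proposal has a genuine gap, and you have named it yourself without closing it: the uniform-in-$\sigma$ stability of the local flux reconstruction and the robustness of the efficiency constant ${\mbb C}$ across the threshold $\sqrt{\sigma_r}\,h_r=1$. This is not a routine detail one can wave at with ``standard bubble-function technique'': the classical bubble-function efficiency argument produces constants that degenerate as $\sigma_r h_r^2\to\infty$ unless the residual is weighted exactly as in (\ref{yappi-1111}) and the patch problems are posed with a $\sigma$-dependent norm. Establishing that the patch reconstructions are stable uniformly in $\sigma_r\in[0,\infty)$ is the substantive content of the cited paper and occupies most of its analysis. A second, smaller issue: in your reliability argument you treat the elements of ${\mc R}\smallsetminus{\mc R}_*$ as if the residual there still splits into an equilibrated part plus oscillation, but on those elements no equilibration is imposed at all (property {\it ii)} holds only on ${\mc R}_*$); the correct step is simply to absorb the full weighted residual $\sigma_r^{-1/2}\|\Pi_r^1 f-\sigma_r u_{\mr{fem}}+{\mr{div}}\,{\bf z}\|_{L_2(\tau_r)}$ into $\eta_{\tau_r}$ as the definition (\ref{yappi-1111}) prescribes, and peel off only the $f-\Pi_r^1 f$ oscillation. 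Your sketch would be acceptable as a reading guide to \cite{AinsworthVejchodsky:2015}, which is all the paper itself offers, but it does not constitute an independent proof.
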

\begin{proof}  See Ainsworth and Vejchodsky \cite{AinsworthVejchodsky:2015} for the proof.    We note that in this work the bounds (\ref{AiVe-1}), (\ref{AiVe-2}) in a more general form are derived under more general conditions. In particular,  $\Gamma_N\neq \varnothing$,  the bound (\ref{AiVe-2}) is proved in the local version, {\em i.e.}, with $\eta_{\tau_r}^2({\bf z})$ on the left and with the restriction of the right part to the patch $\delta^{(r)}$.
\end{proof}
  \par Let us present one more majorant obtained by  Cheddadi {\em et al.} \cite{CheddadiRadekFucikPrietoVohralik:2009} for approximate solutions of the singularly perturbed reaction-diffusion problem by the method of vertex-centered finite volumes.  Let us introduce the notations: ${\mc D}_h$  is the dual in respect to  $ \mathcal{T}_h$ partition of $\Omega$;  ${\mc S}_h$ is the finer mesh, induced by the partition   ${\mc D}_h$; ${ D}$ is the polygon  with the center in the vertex of triangulation $ \mathcal{T}_h$ and  containing all simplices of the finer mesh  with this vertex, $h_{ D}$ is its diameter; ${\mc D}_h^{\mr{int}}$ is the set of all polygons  ${ D}$, for which $\bo{ D} \cap \bo \Omega=\varnothing$. For additional information  about these objects we refer to  \cite{CheddadiRadekFucikPrietoVohralik:2009}.

\begin{theorem}\label{Th:Ch-Fu-P-Vo}
    Let $u_h$ be the solution by the method of vertex-centered finite volumes, $e_h=u-u_h$,  vector-function ${\bf z} \in {\bf H}(\Omega, {\mr{div}})$ satisfy the equalities
 \begin{equation}\label{Chedd-1}
     (f- \nabla{\bf z}-\sigma u_h,1)_{ D}=0\,, \q \forall { D}\in {\mc D}_h^{\mr{int}}\,,
 \end{equation}
   and $\theta_D={\mr{min}}({C}_{ D}h_{ D}^2, \sigma_{ D}^{-1})$, where ${C}_{ D}$ is the constant from the Poincar\'{e} inequality for the polygon
    ${\mc D}$.  Then
 \begin{equation}\label{Chedd-2}
 \vertiii{e_h}^2\le \eta_\Omega^2({\bf z})=
    \sum_{{ D}\in {\mc D}_h}
 \Big[  \|\nabla u_h+{\bf z} \|_{L_2({D})}
 +\sqrt{\theta_D}\|f-\sigma u_h -\nabla\cdot {\bf z}\|_{L_2({ D})}
 \Big]^2\,.
\end{equation}
\end{theorem}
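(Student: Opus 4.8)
The plan is to derive the bound from the standard functional a posteriori framework, following the same pattern that underlies Aubin's majorant \eqref{Aubin-1} and its variant \eqref{Chur}, but localized to the dual cells $D\in\mathcal{D}_h$. First I would write the weak identity for the error. For any test function $w\in\7H^1(\Omega)$, integration by parts applied to the residual of the problem $-\Delta u+\sigma u=f$ gives
\begin{equation*}
(\nabla e_h,\nabla w)+\sigma(e_h,w)=(f-\sigma u_h,w)-(\nabla u_h,\nabla w).
\end{equation*}
The next move is to insert the flux reconstruction $\mathbf{z}\in\mathbf{H}(\Omega,\mathrm{div})$ by adding and subtracting $(\mathbf{z},\nabla w)$ and integrating the added term by parts. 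Since $\Gamma_D=\partial\Omega$ forces $w|_{\partial\Omega}=0$, the boundary contribution vanishes and one obtains the flux-balanced representation
\begin{equation*}
(\nabla e_h,\nabla w)+\sigma(e_h,w)=-(\nabla u_h+\mathbf{z},\nabla w)+(f-\sigma u_h-\nabla\cdot\mathbf{z},w).
\end{equation*}
Setting $w=e_h$ on the left produces exactly $\vertiii{e_h}^2$, so the whole task reduces to bounding the right-hand side from above by the claimed sum over cells.

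The first term on the right is handled directly by Cauchy--Schwarz, summed cellwise: $|(\nabla u_h+\mathbf{z},\nabla e_h)|\le\sum_D\|\nabla u_h+\mathbf{z}\|_{L_2(D)}\|\nabla e_h\|_{L_2(D)}$. For the residual term, the essential point is to exploit the equilibration constraint \eqref{Chedd-1}, which states that $f-\sigma u_h-\nabla\cdot\mathbf{z}$ has zero mean on every interior cell $D$. This lets me subtract the cell average of $e_h$ at no cost, so that on each $D$ I may replace $e_h$ by $e_h-\bar e_h^D$ and then apply the Poincar\'e inequality $\|e_h-\bar e_h^D\|_{L_2(D)}\le C_D h_D\|\nabla e_h\|_{L_2(D)}$. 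This is precisely where the $C_D h_D$ factor inside $\theta_D$ enters; the alternative factor $\sigma_D^{-1/2}$ comes from instead bounding $(f-\sigma u_h-\nabla\cdot\mathbf{z},e_h)$ without the Poincar\'e trick, using $\sqrt{\sigma}\|e_h\|_{L_2(D)}\le\vertiii{e_h}_D$. Taking the minimum of the two estimates on each cell yields the weight $\sqrt{\theta_D}$ multiplying $\|f-\sigma u_h-\nabla\cdot\mathbf{z}\|_{L_2(D)}$, with the accompanying factor being either $\|\nabla e_h\|_{L_2(D)}$ or $\sqrt{\sigma}\|e_h\|_{L_2(D)}$, both dominated by the local energy norm $\vertiii{e_h}_D$.

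Assembling these two pieces gives a cellwise bound of the form $\vertiii{e_h}^2\le\sum_D\bigl[\,\|\nabla u_h+\mathbf{z}\|_{L_2(D)}+\sqrt{\theta_D}\|f-\sigma u_h-\nabla\cdot\mathbf{z}\|_{L_2(D)}\,\bigr]\,\vertiii{e_h}_D$. A discrete Cauchy--Schwarz inequality in the index $D$ then separates the factor $\bigl(\sum_D\vertiii{e_h}_D^2\bigr)^{1/2}=\vertiii{e_h}$, which cancels one power of the energy norm, leaving exactly \eqref{Chedd-2}. I expect the main obstacle to be the bookkeeping at boundary cells $D\notin\mathcal{D}_h^{\mathrm{int}}$, where the zero-mean property \eqref{Chedd-1} is not imposed; there one must argue that $e_h$ itself (not just $e_h-\bar e_h^D$) is controlled, using that $e_h$ vanishes on $\partial\Omega$ so a Poincar\'e--Friedrichs inequality on the boundary-touching cell still supplies the needed $h_D$ factor. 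The other delicate point is verifying that the constant $C_D$ genuinely matches the Poincar\'e constant for the specific polygonal cell geometry $D$ of the dual mesh, which is where I would lean on the detailed construction in \cite{CheddadiRadekFucikPrietoVohralik:2009} rather than re-derive it.
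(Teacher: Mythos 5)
The paper does not prove this theorem at all: its ``proof'' is a one-line citation to Theorem~4.5 of Cheddadi \emph{et al.} \cite{CheddadiRadekFucikPrietoVohralik:2009}, so there is no in-paper argument to compare against. Your sketch is a correct outline of the standard equilibrated-flux argument used in that reference --- error identity with the inserted flux $\mathbf{z}$, cellwise Cauchy--Schwarz on the diffusive term, the zero-mean property \eqref{Chedd-1} plus the Poincar\'e inequality (or the crude $\sigma^{-1/2}$ bound, whichever is smaller) on the residual term, a Friedrichs-type inequality on boundary cells, and a final discrete Cauchy--Schwarz over $D$ to cancel one power of $\vertiii{e_h}$ --- and it reproduces \eqref{Chedd-2} with the stated weights $\sqrt{\theta_D}$.
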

 \begin{proof}   Theorem is one of the results of \cite{CheddadiRadekFucikPrietoVohralik:2009}, see Theorem~4.5.
\end{proof}

\par Majorants in  (\ref{rep-frol}) --  (\ref{Chur}) have definite merits, but are not consistent at the application, for instance, to
  solutions by the finite element and other mesh methods.   If $v=u_{\mr{fem}}$  is the finite element solution
  to the problem  (\ref{poiss}) at
    $\Gamma_D=\bo \Omega$, $\psi_D \equiv 0$, ${\bf A}={\bf I}$,  at $ \sigma= 0$, then we can use (\ref{rep-frol}). For our purpose it is sufficient to consider the approximate solutions from the space ${\mbb V}_{h}(\Omega) ={\mbb V}_{h,p}^1(\Omega) \!\!\subset\!\! C^1(\Omega)\!\cap\!  H^2(\Omega)$. Under the assumption $u\in H^l(\Omega)$, we have  a priori error bounds
   \begin{equation}\label{not improve}
   \|u-v\|_{H^k(\Omega)}\le c h^{l-k}\|u\|_{H^l(\Omega)}\,, \q k=1,2,\q k\le l\le p+1\,.
  \end{equation}
 In particular, if $f\in L_2(\Omega)$ and consequently $l=2$, see below (\ref{2-f}), then according to (\ref{not improve}) the left part of (\ref{rep-frol})
  is estimated as ${\mc O}(h^2)$. At the same time at the choice ${\bf z}=-\nabla v$ the first term of the right part of (\ref{rep-frol})
  vanishes, but $ c_\Omega(1+\frac{1}{\epsilon})\|\nabla\cdot {\bf z}- {
    f}\|_{L_2(\Omega)}^2$ at any $\epsilon>0$ can be dounded only by a constant.  The bounds (\ref{not improve})  are {\em not impovable} in the order for $u\in H^2(\Omega)$.  For  $\Omega \in {\mbb R}^2$ this was proved  by Oganesian and Ruhovets \cite{OganesianRuhovets:1979} by estimating the corresponding Kolmogorov's width.  From their resullts and the results on the regularity of the solutions of (\ref{poiss}),  it follows existence of such $f\in L_2(\Omega)$ that $u\in H^2(\Omega)$
    and the second summand on the right of (\ref{rep-frol}) is estimated by the constant \textit{from below}.  In other words the orders of smallness of the left and the right parts of the a posteriori bound are different, and the value ${\mc O}(h^2)$ on the left is estimated
    by the right part only with the order of unity. If $l>2$, the  left and the right parts  are estimated with not equal orders ${\mc O}(h^{2(l-k)})$ and ${\mc O}(h^{2(l-k-1)})$.
   \par Inconsistency of the majorant (\ref{Chur})  at $\sigma\le c h^{-\alpha},\,\, 0\le \alpha <2,\,\, c={\mr{const}}$, is established in
   a similar way.  Majorant (\ref{yappi-1}) is in general  also inconsistent. The inconsistency of the above mentioned majorants, obviously, is retained, if finite elements of the class $C$  are used and the test flax is found by some recovery procedure, satisfying only the requirements  ${\bi \alpha}$), ${\bi \gamma}$).
    \par  The equality of the orders of smallness of the left and right parts of the  majorants (\ref{AiVe-1}) and  (\ref{Chedd-2}) is well provided, as follows, {\em e.g.},  from (\ref{AiVe-2}) and similar bound, proved in \cite{CheddadiRadekFucikPrietoVohralik:2009}.
     However, it is achieved only for the test fluxes, satisfying the additional conditions reflecting the requirement ${\bi \beta}$), see (\ref{AiVe-0}) and (\ref{Chedd-1}).  For this reason these majorants
     might be called conditionally consistent.

 \section{\normalsize Modified Aubin's  a posteriori error majorant rodust for all $\sigma\ge 0$
 } \label{Se:consist}  

 \par
    In this Section we derive the guaranteed, reliable  and robust majorant\footnote{Definitions of the terms {guaranteed, reliable, locally effective  etc., used in relation to the a posteriori error estimates,   can be found in  \cite{AinsworthVejchodsky:2015}}.}, which is well defined for all $\sigma \ge 0$.
    More over, it will be shown that it is consistent for the finite element solutions of the problems with  sufficiently smooth data, see Section~\ref{Se:cons-inverse}.  The new majorant will coincide with the Aubin's majorant for $\sigma \ge \sigma_*$, where  $ \sigma_*$ is some critical value, which can be differently defined for different numerical methods and different ways of the derivation of a posteriori bounds. In general,  when $v \in \7H^1_{\Gamma_D}(\Omega)$  is any approximation for $u$,  $\sigma_*$   can be defined from the inequality
  \begin{equation}\label{Aub-1}
   \frac{\| u - v  \|_{{\bf A}} ^2}{\hspace{5.5mm} \| u - v \|_{L_2(\Omega)} ^2}   \ge  \sigma_*  > 0\,.
   \end{equation}
   \par In some situations  this ineqality can be relaxed. Suppose that $v=u_G$   is the approximate solution by the Galerkin  method in the subspace ${\mc V}(\Omega)\subset  \7H^1_{\Gamma_D}(\Omega,\Delta)$. Then it can be adopted that $\sigma_*$ satisfies
  \begin{equation}\label{Aub-100}
  \frac{\| u - v \|_{{\bf A}} ^2}{\hspace{5.5mm}\| u - Qu \|_{L_2(\Omega)}^2}   \ge  \sigma_*  > 0\,.
   \end{equation}
 where $Q$ is the operator of orthogonal projection $ L_2(\Omega)\rightarrow{\mc V}(\Omega)$,  {\em i.e.}, such that for  $\forall \phi\in L_2(\Omega)$ we have
  $$
       (Q\phi, \psi)_\Omega=(\phi, \psi)_\Omega\,,  \quad \forall\, \psi \in {\mc V}(\Omega)\,.
  $$
\par  Inequality (\ref{Aub-100}) can be also  used for $v$ from any subspace  ${\mc V}(\Omega)\subset  \7H^1_{\Gamma_D}(\Omega)$,
  but in this case additional  conditions on the test flax ${\bf z}$, arise. It is sufficient,
for instance, that   ${\bf z}$ satisfied to the equalities (\ref{G-bound-As}).

      \par Let  $\hat{f}(x)=\Pi_r^1 f$ for $ x\in \tau_r, \,\, r=1,2,\dots,{\mc R}$. In Theorem below  domains $\tau_r$  can be understood as arbitrary convex subdomains of some partition of the domain
  $$
      {\Omega}={\mr{interior}} \{\,\bigcup_{1}^{\mc R} \OL{\tau}_r \}\,, \quad \tau_r\cup \tau_{r'}=\varnothing,\,\,r\neq r',
      \quad {\mr{diam}}[\tau_r]=h_r\,,
 \vspace{-2mm}
  $$
  for which  the Poincar\'{e} inequalities hold, see, {\em e.g.},    Nazarov and Poborchi \cite{NazarovPoborchi:2012},
    $$
     \inf _{c\in {\mbb R}} \,\|\phi-c\|_{L_2(\tau_r)}\le \frac{h_r}{\pi} |\phi|_{H^1(\tau_r)}\,, \quad \phi \in H^1(\tau_r)\,.
    $$
 \begin{theorem}\label{Th: K3}
   Let $\Gamma_D=\bo \Omega$, conditions of Theorem~\ref{Th:Aubin-1} be fulfilled, and
    $\sigma_*$  satisfies the inequality (\ref{Aub-1}). Then
 \begin{equation}\label{K-23}
               |\!\!\,\!|\!\!\,\!| v-u|\!\,\!\!|\!\,\!\!|^2\le \Theta  {\mc M} (\sigma,\sigma_*,f,v,{\bf z})\,,
   \end{equation}
  where
      \begin{equation}\label{K-23+}
                     {\mc M} (\sigma,\sigma_*,f,v,{\bf z})=       \Big[ ]\!| {\bf A}\nabla\,v +{\bf z} |\![_{{\bf A}^{-1}}^2+
            \theta    \|f-\sigma v - {\mr{div}} \,{\bf z}\|_{L_2(\Omega)}^2 \Big]\,,
 \end{equation}
   and
 \begin{equation}\label{K-23-1}
 \begin{array}{ll}
  \Theta=\left\{  \begin{array}{ll}
   2/(1+\kappa)\, , &\forall\, \sigma\in [0,\sigma_*] \\
    \vspace{-5mm}
    \\
    1 \,,   &\forall\, \sigma> \sigma_*
    \end{array} \right\}, &
     \theta=\left\{  \begin{array}{ll}
 1/\sigma_* \, ,    &\forall\, \sigma\in [0,\sigma_*] \\
    \vspace{-5mm}
    \\
 1/\sigma \, ,    &\forall\, \sigma> \sigma_*
 \end{array} \right\}\,.
 \end{array}
 \end{equation}
   with $\kappa=\sigma / \sigma_*$. Besides, for $\sigma \in [0,\sigma_*]$ and $\sigma \ge \sigma_*$,  respectively, we have the bounds
 \begin{equation}\label{K-23-2}
 \begin{array}{c}
  |\!\!\,\!|\!\!\,\!| v-u|\!\,\!\!|\!\,\!\!|^2\le  \Theta_1  {\mc M}(\sigma,\sigma_*,\hat{f},v,{\bf z})
   +      \sum_r \frac{h_r^2}{\varepsilon\pi^2}\int_{\tau_r} (f-\Pi_r^1 f)^2dx\,,  \q \forall\,\varepsilon >0\,,
     \end{array}
\end{equation}
 \begin{equation}\label{K-23-3}
 \begin{array}{c}
  |\!\!\,\!|\!\!\,\!| v-u|\!\,\!\!|\!\,\!\!|^2\le
    \Theta_2 {\mc M} (\sigma,\sigma_*,\hat{f},v,{\bf z})   +  \sum_r \frac{1}{\sigma}\int_{\tau_r} (f-\Pi_r^1 f)^2dx\,,
     \end{array}
\end{equation}
 where
 \begin{equation*}\label{K-23-4}
  \Theta_1=\left\{  \begin{array}{ll}
   (2+\varepsilon)/(1+\kappa)\,, \q &   0\le\sigma \le \sigma_*/(1+\varepsilon)\,,\\
    \vspace{-5mm}
    \\
    1 +\varepsilon \,,  &  \sigma_*/(1+\varepsilon) \le \sigma\le \sigma_*\,,
    \end{array} \right.
 \end{equation*}
 and
 $$
 \Theta_2=1+ \frac{1}{1+\kappa^{-1}}\,.
 $$
    If $v=u_G$ is the approximate solution  by the method of Galerkin in the space ${\mc V}(\Omega)\subset  \7H^1_{\Gamma_D}(\Omega,\Delta)$,  then the bound (\ref{K-23})-(\ref{K-23-1})  takes place with $\sigma_*$, satisfying to the inequality (\ref{Aub-100}) and $ {\bf z}={\bf z}_G:=- {\bf A} \nabla \,u_G$, {\em i.e.},
 \begin{equation}\label{Koo}
 \begin{array}{c}
               |\!\!\,\!|\!\!\,\!| u_G-u|\!\,\!\!|\!\,\!\!|^2\le \Theta  {\mc M}_G (\sigma,\sigma_*,f,u_G,{\bf z}_G)\,,
               \\
    \vspace{-3mm}
       \\
                \quad  {\mc M}_G (\sigma,\sigma_*,f,u_G,{\bf z}_G)=\theta    \|f-\sigma u_G - {\mr{div}} \,{\bf z}_G\|_{L_2(\Omega)}^2 \,.
    \end{array}
   \end{equation}
 \end{theorem}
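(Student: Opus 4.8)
\emph{The plan} is to reduce everything to one error--residual identity and then to a single weighted Cauchy--Schwarz step whose weight is tuned to the regime of $\sigma$. Write $e:=v-u$. Since $\Gamma_D=\partial\Omega$ and $u,v$ carry the same Dirichlet data, $e\in\7H^1(\Omega)$ and may be used as a test function. Subtracting the weak equation for $u$ from $\int_\Omega\nabla v\cdot\mathbf A\nabla e+\sigma\int_\Omega ve-\int_\Omega fe$ and integrating $\int_\Omega\mathbf z\cdot\nabla e$ by parts (the boundary term drops because $e|_{\partial\Omega}=0$) gives
\[
\vertiii{e}^2=\int_\Omega(\mathbf A\nabla v+\mathbf z)\cdot\nabla e-\int_\Omega\big(f-\sigma v-\mathrm{div}\,\mathbf z\big)\,e .
\]
Cauchy--Schwarz in the $\mathbf A/\mathbf A^{-1}$ duality on the first term and in $L_2(\Omega)$ on the second yields the fundamental bound $\vertiii{e}^2\le P\,\|e\|_{\mathbf A}+R\,\|e\|_0$, where $P=]\!|\mathbf A\nabla v+\mathbf z|\![_{\mathbf A^{-1}}$ and $R=\|f-\sigma v-\mathrm{div}\,\mathbf z\|_0$. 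Introducing $N_\tau^2:=\|e\|_{\mathbf A}^2+\tau\|e\|_0^2$ (so $\vertiii{e}=N_\sigma$) and regrouping $P\|e\|_{\mathbf A}+R\|e\|_0\le\sqrt{P^2+R^2/\tau}\;N_\tau$, I take $\tau=\sigma$ when $\sigma>\sigma_*$, which reproduces Aubin's majorant of Theorem~\ref{Th:Aubin-1} ($\Theta=1,\ \theta=1/\sigma$), and $\tau=\sigma_*$ when $\sigma\in[0,\sigma_*]$, which gives $\vertiii{e}^2\le\sqrt{\mathcal M}\,N_{\sigma_*}$ with $\mathcal M=P^2+R^2/\sigma_*$ (i.e. $\theta=1/\sigma_*$), hence $\vertiii{e}^2\le\mathcal M\,N_{\sigma_*}^2/\vertiii{e}^2$.

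The one genuinely new point --- and the step I expect to be the main obstacle --- is extracting from (\ref{Aub-1}) the \emph{sharp} constant $\Theta=2/(1+\kappa)$, rather than the crude $2$ one would get from $N_{\sigma_*}^2\le 2\|e\|_{\mathbf A}^2\le 2\vertiii{e}^2$. Setting $s:=\sigma_*\|e\|_0^2/\|e\|_{\mathbf A}^2$, which lies in $[0,1]$ precisely by (\ref{Aub-1}),
\[
\frac{N_{\sigma_*}^2}{\vertiii{e}^2}=\frac{\|e\|_{\mathbf A}^2+\sigma_*\|e\|_0^2}{\|e\|_{\mathbf A}^2+\sigma\|e\|_0^2}=\frac{1+s}{1+\kappa s},\qquad \kappa=\sigma/\sigma_*,
\]
and for $\kappa\le 1$ this ratio is nondecreasing in $s$ (its $s$--derivative has sign $1-\kappa$), so its maximum over $[0,1]$ is attained at $s=1$ and equals $2/(1+\kappa)=\Theta$. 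Substituting delivers (\ref{K-23})--(\ref{K-23-1}). The delicate parts are the monotonicity check and the continuous splicing of the two regimes at $\sigma=\sigma_*$ (where $2/(1+\kappa)=1$); everything else is Cauchy--Schwarz.

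For the oscillation refinements (\ref{K-23-2})--(\ref{K-23-3}) I would split $f=\hat f+(f-\hat f)$ inside the identity, treat the $\hat f$--residual exactly as above, and dispose of $\sum_r\int_{\tau_r}(f-\Pi_r^1 f)\,e$ by the device suited to each regime. For $\sigma\in[0,\sigma_*]$ I use that $\Pi_r^1 f$ is the $L_2(\tau_r)$--projection onto $\mathcal P_1\supset\{\text{constants}\}$, so the oscillation is orthogonal to constants; subtracting the elementwise mean of $e$ and invoking the Poincar\'e inequality $\|e-\bar e\|_{L_2(\tau_r)}\le\frac{h_r}{\pi}|e|_{H^1(\tau_r)}$ on the convex $\tau_r$ pairs it with the diffusion part, producing the weight $h_r^2/(\varepsilon\pi^2)$ and, after a Young absorption of the $|e|_{H^1}$--term, the constant $\Theta_1$. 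For $\sigma\ge\sigma_*$ I instead pair it directly with $\|e\|_0$ and apply the weighted Cauchy--Schwarz to the triple $(\|e\|_{\mathbf A},\sqrt\sigma\|e\|_0,\sqrt\sigma\|e\|_0)$; the companion ratio $(\|e\|_{\mathbf A}^2+2\sigma\|e\|_0^2)/\vertiii{e}^2=(1+2\kappa s)/(1+\kappa s)$, again maximal at $s=1$, gives exactly $\Theta_2=(1+2\kappa)/(1+\kappa)=1+1/(1+\kappa^{-1})$ together with a reaction--scaled oscillation weight of order $\sigma^{-1}$. Pinning the oscillation constant to its stated value is a routine (if fiddly) optimization of the Young parameter.

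Finally, for the Galerkin specialization (\ref{Koo}) I choose $\mathbf z=\mathbf z_G=-\mathbf A\nabla u_G$, so $]\!|\mathbf A\nabla u_G+\mathbf z_G|\![_{\mathbf A^{-1}}=0$ and the identity collapses to $\vertiii{e}^2=-\int_\Omega\rho\,e$ with $\rho=f-\sigma u_G-\mathrm{div}\,\mathbf z_G$. Testing the Galerkin equations with $\psi\in\mathcal V(\Omega)$ and integrating by parts (legitimate since $\mathcal V\subset\7H^1_{\Gamma_D}(\Omega,\Delta)$, so $\rho\in L_2$) shows $\int_\Omega\rho\,\psi=0$ for all $\psi\in\mathcal V$; because $u_G,Qu\in\mathcal V$ this upgrades to $\int_\Omega\rho\,e=\int_\Omega\rho\,(u-Qu)$, whence $\vertiii{e}^2\le\|\rho\|_0\,\|u-Qu\|_0$. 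The relaxed hypothesis (\ref{Aub-100}), which bounds $\sigma_*\|u-Qu\|_0^2$ (and not $\sigma_*\|e\|_0^2$) by $\|e\|_{\mathbf A}^2$, is precisely what this residual--orthogonality trick calls for: it gives $\sqrt{\sigma_*}\,\|u-Qu\|_0\le\|e\|_{\mathbf A}\le\vertiii{e}$, and the same reasoning yields (\ref{Koo}) --- indeed with room to spare, since here the orthogonality already produces the factor $1$. This also explains why (\ref{Aub-1}) may be weakened to (\ref{Aub-100}) in the Galerkin setting.
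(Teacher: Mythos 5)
Your proposal is correct and follows essentially the same route as the paper: the same error--residual identity, the same weighted Cauchy--Schwarz with weight $\sigma_*$ (resp.\ $\sigma$), the same use of (\ref{Aub-1}) to pass from $\|e\|_{\bf A}^2+\sigma_*\|e\|_{L_2(\Omega)}^2$ back to $\tfrac{2}{1+\kappa}$ times the energy norm (the paper reaches $\tfrac{2}{1+\kappa}$ by a convex-combination parameter $\beta=1/(1+\kappa)$ rather than your direct maximization of $(1+s)/(1+\kappa s)$ over $s\in[0,1]$, but the two are equivalent), the same elementwise Poincar\'e treatment of the data oscillation, and the same Galerkin-orthogonality argument combined with (\ref{Aub-100}) for (\ref{Koo}). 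The only harmless deviation is that your derivation of (\ref{K-23-3}) leaves the factor $\Theta_2$ on the oscillation term as well, which is weaker than the stated bound only by a constant not exceeding $2$.
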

  \begin{proof}  Obviously, for $\sigma \ge \sigma_*$ the majorant (\ref{K-23})-(\ref{K-23-1}) concides with the majorant (\ref{Aubin-1}),
  whereas  for $\sigma \in [0,\sigma_*]$ majorants (\ref{K-23})-(\ref{K-23-1}) and (\ref{Aubin-1}) are signuficantly different.  Therefore,
  it is necessary to consider only the case $\sigma \le \sigma_*$. In order not to encumber the proof with secondary details, we  assume in the proof that ${\bf A}={\bf I}$ and $\psi_D \equiv 0$.
  \par  For the solution $u$ of the problem and arbitrary $v\in \7{H}^1(\Omega)$ and ${\bf z} \in {\bf H}(\Omega, {\mr{div}})$,  we have
    \begin{equation}\label{bound-1}
\begin{array}{c}
    |\!\!\,\!|\!\!\,\!| v-u|\!\,\!\!|\!\,\!\!|^2= \int_\Omega \big[{ \nabla} (v-u)\cdot \nabla (v-u) +
    \sigma (v-u)(v-u) \big]= \\  \vspace{-3mm}
    \\= \int_\Omega \big[(\nabla v+{\bf z})\cdot \nabla (v-u) -  ({\bf z}+\nabla u)\cdot
    \nabla (v-u)+  
     \sigma (v-u)(v-u) \big]\,.
  \end{array}
\end{equation}
  Integrating by parts the second term on the right and using the inequality
   \begin{equation}\label{simple}
   a_1b_1+a_2b_2\le (a_1^2+\frac{1}{\sigma_*}a_2^2)^{1/2}(b_1^2+{\sigma_*}b_2^2)^{1/2}\,,
  \end{equation}
   we find that
 \begin{equation}\label{K-25}
 \begin{array}{c}
    |\!\!\,\!|\!\!\,\!| v-u|\!\,\!\!|\!\,\!\!|^2  =\| \nabla (u-v)\|_{{\bf L}^2(\Omega)}^2+\sigma\|u-v\|_{ L_2(\Omega)}^2\le \Big[ \| \nabla v-{\bf z}\|_{{\bf L}^2(\Omega)}^2  +
   \\
    \vspace*{-3mm}
   \\
     +\frac{1}{\sigma_*}\|f-\sigma v - {\mr{div}} \,{\bf z}\|_{L_2(\Omega)}^2\Big]^{1/2}\times
    \Big[ \| \nabla (u-v)\|_{{\bf L}^2(\Omega)}^2+\sigma_*\|u-v\|_{ L_2(\Omega)}^2 \Big]^{1/2}\,.
  \end{array}
   \end{equation}
  At $\beta \in (0,1]$ the inequality (\ref{Aub-1})   allow us to transform the second multiplier in the right part of (\ref{K-25}) to the form  \begin{equation}\label{K-26}
  \begin{array}{c}
   \| \nabla (u-v)\|_{{\bf L}^2(\Omega)}^2+\sigma_*\|u-v\|_{ L_2(\Omega)}^2=|\!\!\,\!|\!\!\,\!| u-v|\!\,\!\!|\!\,\!\!|^2+
   (\sigma_*-\sigma)\|u-v\|_{ L_2(\Omega)}^2\le \\
    \vspace*{-3mm}
   \\
 |\!\!\,\!|\!\!\,\!| u-v|\!\,\!\!|\!\,\!\!|^2+
   (\sigma_*-\sigma)\big[ \frac{\beta}{ \sigma_*}\| \nabla (u-v)\|_{{\bf L}^2(\Omega)}^2 + (1-\beta)\|u-v\|_{ L_2(\Omega)}^2 \big] =\\
    \vspace*{-3mm}
   \\
 \big[ 1+(\sigma_*-\sigma)\frac{\beta}{\sigma_*}  \big]\| \nabla (u-v)\|_{{\bf L}^2(\Omega)}^2+\big[ (1-\beta)(\sigma_*-\sigma)+\sigma \big]\|u-v\|_{ L_2(\Omega)}^2\,.
   \end{array}
 \end{equation}
    The choice
    $
    \beta= {1}/(1+\kappa)
    $
    makes the relation of the multipliers before  second and  first norms on the right of (\ref{K-26}) equal to $\sigma$.
      Substituting
     such  $ \beta$  in (\ref{K-26}) and then  (\ref{K-26}) in  (\ref{K-25}) leads to the inequality
   \begin{equation}\label{K-27}
       |\!\!\,\!|\!\!\,\!| v-u|\!\,\!\!|\!\,\!\!|^2 \le \frac {2}{1+\kappa} \Big[ \| \nabla v-{\bf z}\|_{{\bf L}^2(\Omega)}^2     +\frac{1}{\sigma_*}\|f-\sigma v - {\mr{div}} \,{\bf z}\|_{L_2(\Omega)}^2\Big]^{1/2} |\!\!\,\!|\!\!\,\!| v-u|\!\,\!\!|\!\,\!\!| \, ,
         \end{equation}
   which is equivalent to  (\ref{K-23}) at ${\bf A}={\bf I}$.
    \par In order to prove (\ref{K-23-2}), we transform (\ref{bound-1}) to the form
    \begin{equation}\label{pre-bound}
\begin{array}{c}
    |\!\!\,\!|\!\!\,\!| v-u|\!\,\!\!|\!\,\!\!|^2= \int_\Omega \big[{\bi \nabla} (v-u)\cdot \nabla (v-u) +
    \sigma (v-u)(v-u) \big]dx= \\  \vspace{-3mm}
    \\=  \int_\Omega \big\{(\nabla v+{\bf z})\cdot \nabla (v-u) +  [\nabla\cdot{\bf z}+\Delta u +\sigma (v-u)  ]
    \ (v-u)
     \big\} dx = \\  \vspace{-3mm}
    \\
    = \int_\Omega \big\{(\nabla v+{\bf z})\cdot \nabla (v-u) +  [\nabla\cdot{\bf z} - \hat{f}+\sigma v ]
     (v-u) + (\hat{f}-f) (v-u)
     \big\} dx
        \,.
   \end{array}
\end{equation}
   We represent the integral of the last summand in the right part of (\ref{pre-bound})  by the sum of the integrals over
  subdomains $\tau_r,\,\, r=1,2,\dots,{\mc R}$, and each estimate with the help of Poincar\'{e} inequality:
    \begin{equation}\label{Poin}
     \begin{array}{c}
   \int_{\tau_r} (\hat{f}-f) (v-u)dx=\int_{\tau_r} (\hat{f}-f) (v-u-{\mr{c}})dx\le  \\  \vspace{-3mm}
    \\
   \le  \|\hat{f}-f\|_{L_2(\tau_r)} \inf_{{\mr{c}}\in {\mbb R}} \|v-u-{\mr{c}}\|_{L_2(\tau_r)} \le  \\  \vspace{-3mm}
    \\  \le  \frac{d_r}{\varepsilon\pi} \|\hat{f}-f\|_{L_2(\tau_r)}   (\varepsilon|v-u|_{H^1(\tau_r)} )  \,.
   \end{array}
   \end{equation}
  Now for getting (\ref{K-23-2}) it is sufficient to substitute (\ref{Poin}) in (\ref{pre-bound}), to apply to the right part the Cauchy inequality
  and than to use the inequality (\ref{K-26})  with $ \beta$ satisfying the condition
    $
    \sigma[1+ \varepsilon +(\sigma_*-\sigma)\frac{\beta}{\sigma_*} ] =  (1-\beta)(\sigma_*-\sigma)+\sigma
    $.
 \par  The inequality (\ref{K-23-3}) follows for
  $$
  \beta=\frac{1}{2}\big(1+\frac{1}{1+ \kappa^{-1}} \big)\,,
  $$
 from the estimates
 \begin{equation}\label{25}
 \begin{array}{l}
  \hspace*{1mm}   |\!\!\,\!|\!\!\,\!| v-u|\!\,\!\!|\!\,\!\!|^2  \le \Big[ \| \nabla v-{\bf z}\|_{{\bf L}^2(\Omega)}^2  +
       +\frac{1}{\sigma}\|\hat{f}-\sigma v - {\mr{div}} \,{\bf z}\|_{L_2(\Omega)}^2+ \frac{1}{\sigma}  \|\hat{f}-f\|_{L_2(\Omega)}^2 \Big]^{1/2}\times
     \\
    \vspace*{-3mm}
   \\
          \hspace*{14.2mm} \times \,\Big[ \| \nabla (u-v)\|_{{\bf L}^2(\Omega)}^2+2\sigma\|u-v\|_{ L_2(\Omega)}^2 \Big]^{1/2}\,,
     \\  \vspace*{-3mm}
       \\
    \|u-v\|_{ L_2(\Omega)}^2\le \beta  \|u-v\|_{ L_2(\Omega)}^2+(1-\beta)\frac{1}{\sigma_*}\|\nabla(u-v)\|_{ {\bf L}^2(\Omega)}^2\,,
   \end{array}
   \end{equation}
  second of which is obtained with the use of   (\ref{Aub-1}).

  \par  Let us turn to the proof of the bound (\ref{Koo}).  For $e=v-u$ and ${\bf z} \in {\bf H}(\Omega, {\mr{div}})$  we have
    \begin{equation}\label{G-bound}
    |\!\!\,\!|\!\!\,\!| \,e\,|\!\,\!\!|\!\,\!\!|^2=
     \int_\Omega \big\{(\nabla v+{\bf z})\cdot \nabla e +  [  {f}-\sigma v - \nabla\cdot{\bf z} ]\,  e   \big\} dx
        \,.
\end{equation}
    Suppose,
       ${\bf z}$    satisfies the identity
        \begin{equation}\label{G-bound-As}
        \int_\Omega  (  {f}-\sigma v - \nabla\cdot{\bf z} )\,
        \psi
      dx\,, \quad \forall \, \psi\in {\mc V}(\Omega)\,,
       \end{equation}
    on the finite element space  ${\mc V}(\Omega)$.   If  $v\in{\mc V}(\Omega)$, then for $e_\circ=(Qu-u)$  the equalities
    $$
         e-Qe=v-u-Qv+ Qu =  Qu-u=e_\circ
         $$
    hold and
    \begin{equation}\label{G-bound+1}
\begin{array}{c}
    \int_\Omega   [  {f}-\sigma v - \nabla\cdot{\bf z} ] e
     dx   = \beta \int_\Omega   [  {f}-\sigma v - \nabla\cdot{\bf z} ]
     (e-Qe)
     dx +\\
     \vspace*{-2mm}
       \\
     (1-  \beta) \int_\Omega   [  {f}-\sigma v - \nabla\cdot{\bf z} ] e  dx =
         \beta \int_\Omega   [  {f}-\sigma v - \nabla\cdot{\bf z} ]\,
       e_\circ  dx+
       \\ \vspace*{-2mm}
       \\
       (1-  \beta) \int_\Omega   [  {f}-\sigma v - \nabla\cdot{\bf z} ] e   dx
        \,.
  \end{array}
\end{equation}
   Substituting  (\ref{G-bound+1}) in (\ref{G-bound}), and applying the Cauchy inequality and the  inequality
     (\ref{Aub-100}), we get
   \begin{equation}\label{G-bound+2}
    \begin{array}{c}
    |\!\!\,\!|\!\!\,\!| \,e\,|\!\,\!\!|\!\,\!\!|^2\le
    =  \big\{ \|\nabla v+{\bf z})\|^2 +  \big[\frac{\beta}{\sigma_*}
    +
     \frac{1-\beta}{\sigma}\big]\| {f}-\sigma v - \nabla\cdot{\bf z} \|^2
     \big\} ^{1/2} \times
      \\ \vspace*{-3mm}
       \\
     \big\{ (1+\beta)\|e\|_0^2    +(1-\beta)  \sigma \|e\|_0^2  \big\} ^{1/2}
        \,.
    \end{array}
\end{equation}
   \par The use of  $\beta=(1-\kappa)/(1+\kappa)$ leads to the estimate coinciding formally with  (\ref{K-23})-(\ref{K-23-1}),
   but with $\sigma_*$, satisfying the inequality  (\ref{Aub-100}).  Now we note that when ${\mc V}(\Omega) \subset  \7H^1_{\Gamma_D}(\Omega,\Delta)$ the identity  (\ref{G-bound-As}) holds.  Besides, for ${\bf z}= -{\bf A}\nabla u_G$, now well defined,
   the norm $]\!| {\bf A}\nabla\,v +{\bf z} |\![_{{\bf A}^{-1}}$ equals zero, that proves the bound (\ref{Koo}).
\end{proof}
   \begin{remark}\label{Re-1}
   There are other ways of obtaining  majorants similar to (\ref{K-23})--(\ref{K-23-1}) and  (\ref{K-23-2})--(\ref{K-23-3}).
     For simlicity, let ${\bf A}\equiv {\bf I}$ and $\Gamma_D=\bo \Omega$.  We can consider the subsidiary problem
 \begin{equation}\label{f-lambda}
  - \Delta u +\Bbbk\, u=f_{\lambda,\sigma}\,,\q x\in\Omega\,, \q u\big|_{\bo \Omega}=0\,,
\end{equation}
   with an arbitrary $\Bbbk \ge \sigma_*$ and $f_{\lambda,\sigma}=f+(\Bbbk-\sigma)u$, whose solution is the same with the problem
    (\ref{poiss}) at  the specified  ${\bf A}$ and $\Gamma_D$.  For the approximation $v$ of the solution to the problem (\ref{f-lambda}) in Aubin's majorant related to (\ref{f-lambda}) one can use
     the approximation of the problem (\ref{poiss}).  After substitution of $f_{\lambda,\sigma}=f+(\Bbbk-\sigma)u$ in the Aubin's majorant,
   application of the Cauchy inequality with $\varepsilon$, and some manipulations,  we come to the subsidiary majorant.  By minimization of it in the respect of $\Bbbk$, $\beta$ and $\varepsilon$, we come to the set majorants including similar to those in Theorem~\ref{Th: K3}. Obviously, by changing  the choice  of $\lambda,\,\,\beta$, and $\varepsilon$, we can  change the weights before the first and second norms in the right parts of the majorants.
       \end{remark}

  \section{\normalsize Consistent a posteriori  majorants for finite element method errors
      } \label{Se:FEM}  

   For specific classes of approximate solutions and, in particular, for solutions  by the finite element method, the critical values $\sigma_*$ of the reaction coefficient in the derived error majorants  can be estimated.
   \begin{lemma}  \label{Le: Kor}
   Let $\Gamma_D=\bo \Omega$, $\sigma\equiv {\mr{const}}$, $\psi_D \equiv 0$,   $u\in \7{H}^1(\Omega)$, $f\in H^{-1}(\Omega)$, the finite element assemblage generates the space
   ${\mbb V}_{h,p}(\Omega),\,\,p\ge 1$,   and  $e_{\mr{fem}}=u_{\mr{fem}}-u$.
    Then
 \begin{equation}\label{A-Nitsche}
     \|e_{\mr{fem}} \|_{0}\le c_\dag h \|e_{\mr{fem}} \|_{\bf A}\,, \quad c_\dag=\frac{\sqrt{\mu_2}}{\mu_1}  c_{\circ}c_{\mr{ap}}\,,
\end{equation}
  with the constants $c_{\circ},\,\,c_{\mr{ap}}$,  defined below,   see. (\ref{2-f}), (\ref{Appr-phi}).
\end{lemma}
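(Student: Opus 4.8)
The plan is to establish (\ref{A-Nitsche}) by the Aubin--Nitsche duality argument, exploiting that, for $\sigma_*$ defined through (\ref{Aub-1}), this $L_2$-versus-energy estimate is exactly the statement $\sigma_*\ge (c_\dag h)^{-2}$. First I would introduce the auxiliary adjoint problem with pure diffusion: let $w\in\7H^1(\Omega)$ solve $-{\mr{div}}({\bf A}\nabla w)=e_{\mr{fem}}$ in $\Omega$ with $w|_{\bo\Omega}=0$. Since $e_{\mr{fem}}=u_{\mr{fem}}-u\in\7H^1(\Omega)\subset L_2(\Omega)$, elliptic regularity on a sufficiently smooth $\bo\Omega$ gives $w\in\7H^1(\Omega)\cap H^2(\Omega)$ together with the bound $|w|_2\le (c_\circ/\mu_1)\|e_{\mr{fem}}\|_0$, where $c_\circ$ is the regularity constant of (\ref{2-f}) and the factor $\mu_1^{-1}$ reflects the ellipticity lower bound in (\ref{A}) incurred when inverting ${\bf A}$. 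Testing the adjoint equation with $e_{\mr{fem}}$ and integrating by parts (the boundary integral vanishing because $e_{\mr{fem}}|_{\bo\Omega}=0$) yields the duality identity $\|e_{\mr{fem}}\|_0^2=\int_\Omega\nabla e_{\mr{fem}}\cdot{\bf A}\nabla w$.

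Next I would insert into this identity the ${\bf A}$-orthogonal (Ritz) projection $w_h\in\7{\mbb V}_h(\Omega)$ of $w$, characterised by $\int_\Omega\nabla(w-w_h)\cdot{\bf A}\nabla v_h=0$ for all $v_h\in\7{\mbb V}_h(\Omega)$, and split $\int_\Omega\nabla e_{\mr{fem}}\cdot{\bf A}\nabla w=\int_\Omega\nabla e_{\mr{fem}}\cdot{\bf A}\nabla(w-w_h)+\int_\Omega\nabla e_{\mr{fem}}\cdot{\bf A}\nabla w_h$. For the second term I would invoke Galerkin orthogonality for the reaction--diffusion form, namely $\int_\Omega\nabla e_{\mr{fem}}\cdot{\bf A}\nabla v_h+\sigma\int_\Omega e_{\mr{fem}}v_h=0$ for $v_h\in\7{\mbb V}_h(\Omega)$, which rewrites it as $-\sigma\int_\Omega e_{\mr{fem}}w_h$. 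Using the adjoint equation once more gives $\int_\Omega e_{\mr{fem}}w_h=\int_\Omega\nabla w\cdot{\bf A}\nabla w_h$, and the Ritz property (with $v_h=w_h$) then turns this into $\|w_h\|_{\bf A}^2\ge 0$; hence the second term equals $-\sigma\|w_h\|_{\bf A}^2\le 0$ and is simply discarded. It remains to estimate $\int_\Omega\nabla e_{\mr{fem}}\cdot{\bf A}\nabla(w-w_h)\le\|e_{\mr{fem}}\|_{\bf A}\,\|w-w_h\|_{\bf A}$ by Cauchy--Schwarz in the ${\bf A}$-inner product, to bound $\|w-w_h\|_{\bf A}\le\sqrt{\mu_2}\,|w-w_h|_1\le\sqrt{\mu_2}\,c_{\mr{ap}}h\,|w|_2$ via (\ref{A}) and the approximation estimate (\ref{Appr-phi}) (the Ritz error being the best ${\bf A}$-approximation, hence bounded by any interpolant), and finally to insert the regularity bound for $|w|_2$. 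Dividing the resulting inequality $\|e_{\mr{fem}}\|_0^2\le (\sqrt{\mu_2}/\mu_1)\,c_\circ c_{\mr{ap}}h\,\|e_{\mr{fem}}\|_{\bf A}\,\|e_{\mr{fem}}\|_0$ by $\|e_{\mr{fem}}\|_0$ produces (\ref{A-Nitsche}) with the stated $c_\dag=(\sqrt{\mu_2}/\mu_1)c_\circ c_{\mr{ap}}$.

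I expect the delicate point to be the \emph{robustness of the bound in} $\sigma$: a naive estimate of the reaction contribution by Cauchy--Schwarz would produce a term of order $\sigma h^2$ and would restrict the result to $\sigma\lesssim h^{-2}$. The device that removes this restriction is the combination of the \emph{pure-diffusion} adjoint problem with the \emph{${\bf A}$-orthogonal} choice of $w_h$, which forces the reaction contribution to equal $-\sigma\|w_h\|_{\bf A}^2$ and therefore to be nonpositive, so that it drops out and no constraint on $\sigma$ survives. The only structural hypothesis entering the argument is the $H^2$-regularity of the adjoint solution, which is why the smoothness (or convexity) of $\bo\Omega$ is assumed; once (\ref{2-f}) is available, the remaining steps are the routine interpolation and Cauchy--Schwarz manipulations.
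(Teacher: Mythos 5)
Your argument is correct, and it reaches the stated constant $c_\dag=\frac{\sqrt{\mu_2}}{\mu_1}c_\circ c_{\mr{ap}}$, but the mechanism by which you neutralize the reaction term differs from the paper's. The paper never applies duality to $e_{\mr{fem}}$ itself: it introduces the auxiliary best approximations $u_\circ$ (optimal in $L_2$) and $u_{\mr{fe}}$ (optimal in the $\mathbf{A}$-norm, i.e.\ the Ritz projection of $u$), uses the fact that $u_{\mr{fem}}$ minimizes the full energy norm to deduce the comparison inequalities $\|e_{\mr{fem}}\|_0\le\|e_{\mr{fe}}\|_0$ and $\|e_{\mr{fem}}\|_{\mathbf{A}}\le\|e_\circ\|_{\mathbf{A}}$ (see (\ref{L_2_K})), and then runs the textbook Aubin--Nitsche argument for the purely diffusive error $e_{\mr{fe}}$, where Galerkin orthogonality in $a_\Omega(\cdot,\cdot)$ is exact and no reaction term ever appears; the chain $\|e_{\mr{fem}}\|_0\le\|e_{\mr{fe}}\|_0\le c_\dag h\|e_{\mr{fe}}\|_{\mathbf{A}}\le c_\dag h\|e_{\mr{fem}}\|_{\mathbf{A}}$ finishes the proof. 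You instead keep $e_{\mr{fem}}$ throughout and observe that, after inserting the $\mathbf{A}$-orthogonal projection $w_h$ of the dual solution, the residual reaction contribution equals $-\sigma\|w_h\|^2_{\mathbf{A}}\le 0$ and can be discarded --- a correct and rather elegant sign argument that relies on the symmetry of $\mathbf{A}$ and the exact Galerkin orthogonality of $u_{\mr{fem}}$ for the full reaction--diffusion form. Your route is more self-contained for this one lemma; the paper's route has the side benefit that the comparison inequalities (\ref{L_2_K}) are reused later (in Lemma~\ref{Le: Kor3} and in the consistency estimates of Section~\ref{Se:cons-inverse}), so they are worth establishing anyway. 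Two cosmetic points: the paper's regularity bound (\ref{2-f}) is stated directly for the operator with coefficient $\mathbf{A}$, so the extra factor $\mu_1^{-1}$ you attach to $|w|_2$ is a bookkeeping choice rather than a necessity (the paper instead inserts $1/\mu_1$ at the first step of (\ref{L_2-H^1})); and the final division by $\|e_{\mr{fem}}\|_0$ should be accompanied by the trivial remark that the bound holds when this norm vanishes.
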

   \begin{proof}   Let us consider the problem of finding the solution  $\chi\in \7{H}^1(\Omega)$ of the integral identity
   \begin{equation}\label{IntT}
       a_{\Omega}(\chi,v)+\sigma(\chi,v)_\Omega=<F,v>\,, \quad \forall\, v\in \7{H}^1(\Omega)\,,
 \end{equation}
 where
$$
       a_{\Omega}(v,w)=\int_{\Omega} \nabla v\cdot {\bf A}\nabla w \,dx\,.
       $$
  If $\Omega$ is sufficiently smooth and  $\sigma \ge 0$, then
  \begin{equation}\label{2-f}
   \| \chi \|_{2} \le c_\circ \|F \|_{0}\,,\Q
   c_\circ=c_\circ(\Omega)={\mr{const}}\,,
  \end{equation}
     with any  $F\in L_2(\Omega)$. To prove this, we note that  for  $\sigma \le 1$ the inequality holds, see Ladyzhenskaya  and Uraltseva \cite{LadyzhenskayaUraltseva:73}.
    Obviously,  at $\sigma \ge 1$ we have
     \begin{equation}\label{u-sigma-f}
     \|\chi \|_{0}\le \sigma^{-1}\|F\|_{0}
     \,,
\end{equation}
      and  by  (\ref{2-f}) for the problem
           \begin{equation}\label{u-sigma-f_1}
          - {\mr{div }} ({\bf A} {\mr{grad}} \, \chi)  = F_\sigma\,, \q  F_\sigma=F-\sigma\,\chi \,, \q \chi\big|_{\bo
  \Omega}=0\,,
          \end{equation}
           it follows that
         \begin{equation} \label{u-sigma-f+}
          \| \chi \|_{2}\le c_\circ \|F_\sigma\|_{0}\le c_\circ (\|F \|_{0}+
            \sigma \|\chi \|_{0})\le 2 c_\circ \|F
            \|_{0}\,.
            \end{equation}
          It is left only to redefine the constant in (\ref{2-f}).
             \par
      Lt us introduce the notations $u_\circ,\,  u_{\mr{fe}}$
  and  $  u_s$  for the functions minimizing
  $\| u-\phi\|_{0}^2,\,\| u-\phi\|_{\bf A}^2$, and
  %
  $
  h^{-2}\| u-\phi\|_{0,\Omega}^2+\| u-\phi\|_{\bf A}^2\,,
  $
   recpectively, among all  $\phi\in \7{\mbb V}_h(\Omega)$ and the notations for the respective errors
     $e_\circ=u_\circ -u,\,\,u_{\mr{fe}}=u_{\mr{fe}}-u $  and $e_s=u_s -u$.
 Since $u_{\mr{fem}}$ minimizes  $|\!\!\,\!|\!\!\,\!| u -\phi|\!\,\!\!|\!\,\!\!|^2,\,\, \phi\in \7{\mbb
  V}_h(\Omega)$, we conclude that
  \begin{equation}\label{L_2_е}
     \|e_{\mr{fem}}\|_{\bf A}^2+\sigma \|e_{\mr{fem}}\|_0^2 \le \|u-\wt{u} \|_{\bf A}^2+\sigma \|u-\wt{u}\|_{0}^2\,,
    \end{equation}
  where $\wt{u}$ can be any from functions $\wt{u}=u_\circ,\,  u_{\mr{fe}},\,u_s$. If to take into attention the inequalities
    $\|e_{\mr{fem}}\|_{0}\ge \|e_{\circ}\|_{0}$ and $\|e_{\mr{fem}}\|_{\bf A}\ge \|e_{\mr{fe}}\|_{\bf A}$, following from the definitions of fun   $u_{\circ}$ and $u_{\mr{fe}}$,  then
 (\ref{L_2_е}) implies
 \begin{equation}\label{L_2_K}
 \begin{array}{c}
  \|e_{\mr{fem}}\|_{0}\le \|e_{\mr{fe}} \|_{0}\,,\\
             \vspace*{-3mm}
             \\
  \|e_{\mr{fem}} \|_{\bf A}\le \|e_{\circ}\|_{\bf A}\,.
  \end{array}
 \end{equation}

   \par Let    $\phi \in \7{H}^1(\Omega)$ be the solution of the problem
   \begin{equation}\label{poisson}
            a_{\Omega}(v,\phi)= (v,e_{\mr{fe}})_\Omega\,, \quad \forall\, v\in \7{H}^1(\Omega).
   \end{equation}
    Obviously,  $e_{\mr{fe}}\in L_2(\Omega)$ and as a consequence of  (\ref{2-f}) and symmetry of the bilinear form
    $a_{\Omega}(\cdot,\cdot)$, we have $\phi \in H^2(\Omega)$ and
    $$
               \|\phi \|_{2} \le c_{\circ}\|e_{\mr{fe}} \|_{0} \,.
    $$
     let us approximate $\phi $ by some function $\phi_{\mr{ap}}\in \7{\mbb V}_{h,p}(\Omega) $.
       We can use $\phi_{\mr{ap}}\in \7{\mbb V}_{h,1}(\Omega) \subset \7{\mbb V}_{h,p}(\Omega) $, and, in the case when the condition
       $\mc A)$ is fulfilled, obtain it by means of the quasi-interpolation operator of Scott and Zhang \cite{ScottZhang:90}.  In general,
       $\phi_{\mr{ap}}$  can be understood as the finite element solution, or  $L_2$-projection, or interpolation from $\7{\mbb V}_{h,1}(\Omega)$  or $\7{\mbb V}_{h,p}(\Omega) $. Let us underline that since in Lemma we only use the constant $c_{\mr{ap}}$ in the inequality
   \begin{equation}\label{Appr-phi}
     |\phi- \phi_{\mr{ap}}|_1^2\le c_{\mr{ap}}^2h^2\|\phi\|_2^2\le c_{\circ}^2c_{\mr{ap}}^2 h^2\|e_{\mr{fe}} \|_{0}^2\,,
    \end{equation}
       but not the very function $\phi_{\mr{ap}}$,   we can imply by $\phi_{\mr{ap}}$ any of the listed approximations which provides the better value of the constant.

   \par  Estimating $\|e_{\mr{fe}} \|_{0}$ by means of the Aubin-Nitsche trick   \cite{Aubin:72} for the problem (\ref{poisson}) and using the bound (\ref{Appr-phi}), we get:
 \begin{equation}\label{L_2-H^1}
 \begin{array}{c}
    \|e_{\mr{fe}} \|_{0}^2=\frac{1}{\mu_1}a_{\Omega}(e_{\mr{fe}}, \phi)\le\frac{1}{\mu_1}\inf_{w\in \7{\mbb V}_h(\Omega)} | a_{\Omega}(e_{\mr{fe}}, \phi-w)| \le\\
             \vspace*{-3mm}
             \\
     \frac{\sqrt{\mu_2}}{\mu_1} \|e_{\mr{fe}} \|_{\bf A}\inf_{w\in \7{\mbb V}_h(\Omega)} |\phi-w |_{1}\le
    \frac{\sqrt{\mu_2}}{\mu_1} \|e_{\mr{fe}} \|_{\bf A} |\phi - \phi_{\mr{ap}}|_{2}\le\\
             \vspace*{-3mm}
             \\
             \frac{\sqrt{\mu_2}}{\mu_1}  c_{\circ}c_{\mr{ap}} h\|e_{\mr{fe}} \|_{\bf A}\|e_{\mr{fe}} \|_{0}\,.
  \end{array}
  \end{equation}
   This  bound together with the inequality (\ref{L_2_K}) and the definitions  of functions $e_{\mr{fe}} ,\,e_{\mr{fem}} $
   results in the bound (\ref{A-Nitsche}).
      \end{proof}
        \begin{theorem}  \label{Th: Kor}
   Let $\Gamma_D=\bo \Omega$,  $\psi_D \equiv 0$,  and $u\in \7{H}^1(\Omega, \Delta)$. Let also the finite element assemblage generate the space $\7{\mbb V}_{h,p}^{\,0}(\Omega) \subset \7{H}^1(\Omega),\,\,p\ge 1,$ and $u_{\mr{fem}}$
   be the solution by the finite element method.  Then for $\sigma$ satisfying $0\le \sigma\le \sigma_*=1/(c_\dag h)^2$, where
   $c_\dag=\frac{\sqrt{\mu_2}}{\mu_1}  c_{\circ}c_{\mr{ap}}$,  and any
  ${\bf z} \in {\bf H}(\Omega, {\mr{div}})$
 \begin{equation}\label{K-fem+1}
    \begin{array}{c}
    |\!\!\,\!|\!\!\,\!| e_{\mr{fem}}|\!\,\!\!|\!\,\!\!|^2\le \frac{2}{1+{\mr{c}}_\dag^{2} h^2\sigma  }{\mc M}_{\mr{fem}}^{(1)} (\sigma,{f},{\bf z})
        \,,
                \\
    \vspace*{-3mm}
   \\
            {\mc M}_{\mr{fem}}^{(1)} (\sigma,f,{\bf z})=      \Big[ ]\!| {\bf A}\nabla\,u_{\mr{fem}}+{\bf z} |\![_{{\bf A}^{-1}}^2+
            {\mr{c}}_\dag^{2} h^2    \|{f}-\sigma u_{\mr{fem}} - {\mr{div}} \,{\bf z}\|_{L_2(\Omega)}^2 \Big]\,.
     \end{array}
     \end{equation}
    Under the condition
    ${\mc A}$), for   $\sigma \le \sigma_*/(1+\varepsilon)$ it holds also the bound
 \begin{equation}\label{K-fem+2}
               |\!\!\,\!|\!\!\,\!| e_{\mr{fem}}|\!\,\!\!|\!\,\!\!|^2\le \frac{2+\varepsilon}{1+{\mr{c}}_\dag^{2} h^2\sigma  }{\mc M}_{\mr{fem}}^{(1)} (\sigma,\hat{f},{\bf z})
         +      \sum_r \frac{h_r^2}{\varepsilon\pi^2}\int_{\tau_r} (f-\Pi_r^1 f)^2dx\,,  \q \forall\,\varepsilon >0      \,.
 \end{equation}
\end{theorem}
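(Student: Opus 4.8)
The plan is to derive Theorem~\ref{Th: Kor} as an essentially immediate corollary of the general majorant of Theorem~\ref{Th: K3}, once Lemma~\ref{Le: Kor} is used to pin down an explicit admissible value of the critical parameter $\sigma_*$ for the finite element solution. First I would invoke Lemma~\ref{Le: Kor}, which under the stated hypotheses yields $\|e_{\mathrm{fem}}\|_0 \le c_\dag h\,\|e_{\mathrm{fem}}\|_{\bf A}$ with $c_\dag=\frac{\sqrt{\mu_2}}{\mu_1}c_\circ c_{\mathrm{ap}}$. Squaring and rearranging, and recalling that $e_{\mathrm{fem}}=u_{\mathrm{fem}}-u$ so that the norms of $u-u_{\mathrm{fem}}$ and of $e_{\mathrm{fem}}$ coincide, this gives
\[
\frac{\|u-u_{\mathrm{fem}}\|_{\bf A}^2}{\|u-u_{\mathrm{fem}}\|_{L_2(\Omega)}^2}\ \ge\ \frac{1}{(c_\dag h)^2}\ =:\ \sigma_*,
\]
which is exactly the hypothesis (\ref{Aub-1}) of Theorem~\ref{Th: K3} for the particular choice $v=u_{\mathrm{fem}}$ and this explicit $\sigma_*$. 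Thus the abstract critical value is realized concretely as $\sigma_*=1/(c_\dag h)^2$.

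Having verified (\ref{Aub-1}), I would apply Theorem~\ref{Th: K3} verbatim with $v=u_{\mathrm{fem}}$ and $\sigma_*=1/(c_\dag h)^2$. For $\sigma\in[0,\sigma_*]$ that theorem delivers bound (\ref{K-23})--(\ref{K-23-1}) with $\theta=1/\sigma_*=c_\dag^2h^2$ and $\Theta=2/(1+\kappa)$, where $\kappa=\sigma/\sigma_*=c_\dag^2h^2\sigma$. Inserting these values into $\mathcal{M}(\sigma,\sigma_*,f,v,{\bf z})$ reproduces the residual weight $c_\dag^2h^2$ and hence the definition of $\mathcal{M}_{\mathrm{fem}}^{(1)}$, while $\Theta$ becomes precisely $2/(1+c_\dag^2h^2\sigma)$; this is exactly (\ref{K-fem+1}). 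The only thing to confirm is that the admissible range $0\le\sigma\le\sigma_*=1/(c_\dag h)^2$ is respected, which holds by hypothesis.

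For the sharper bound (\ref{K-fem+2}) I would instead appeal to part (\ref{K-23-2}) of the same theorem, again with $v=u_{\mathrm{fem}}$ and $\sigma_*=1/(c_\dag h)^2$. In the subrange $0\le\sigma\le\sigma_*/(1+\varepsilon)$ the coefficient $\Theta_1$ of Theorem~\ref{Th: K3} equals $(2+\varepsilon)/(1+\kappa)=(2+\varepsilon)/(1+c_\dag^2h^2\sigma)$, and the oscillation term $\sum_r\frac{h_r^2}{\varepsilon\pi^2}\int_{\tau_r}(f-\Pi_r^1 f)^2\,dx$ carries over unchanged. The role of hypothesis $\mathcal{A}$) here is solely to make the Poincar\'e estimates (\ref{Poin}) valid on the simplicial cells $\tau_r$ and to render the element-wise projection $\hat f=\Pi_r^1 f$ well defined, so that (\ref{K-23-2}) may legitimately be invoked on this mesh; note that (\ref{K-fem+1}) itself does not require $\mathcal{A}$), since Lemma~\ref{Le: Kor} furnishes a well-defined $c_\dag$ for a general admissible assemblage.

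Since the substantive analytic content — the Aubin--Nitsche duality argument behind the $L_2$-to-$H^1$ error relation in Lemma~\ref{Le: Kor}, and the construction of the robust majorant in Theorem~\ref{Th: K3} — is already in place, I do not expect a genuine obstacle. The only point demanding care is bookkeeping: matching the abstract quantities $\theta$, $\Theta$, and $\Theta_1$ of Theorem~\ref{Th: K3} against the explicit powers of $c_\dag h$ in (\ref{K-fem+1})--(\ref{K-fem+2}), and making sure that the specialization $v=u_{\mathrm{fem}}$ satisfies (\ref{Aub-1}) with the claimed constant $1/(c_\dag h)^2$ rather than merely with some unspecified $\sigma_*>0$.
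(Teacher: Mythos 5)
Your proposal is correct and takes essentially the same route as the paper, whose own proof is just the one-line observation that the theorem follows from Theorem~\ref{Th: K3} and Lemma~\ref{Le: Kor} once $\sigma_*=1/(c_\dag h)^2$ is identified via (\ref{A-Nitsche}); you have simply spelled out the bookkeeping ($\kappa=c_\dag^2h^2\sigma$, $\theta=1/\sigma_*$, $\Theta_1$ for the second bound) that the paper leaves implicit.
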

  \begin{proof}   Since   ${\mc M}_{\mr{fem}}^{(1)} (\sigma,f,{\bf z})={\mc M} (\sigma,\sigma_*,f,v,{\bf z})$ with  $\sigma_*$ defined according to  (\ref{A-Nitsche}),  theorem is a direct consequence of  Theorem~~\ref{Th: K3} and Lemma~\ref{Le: Kor}.
\end{proof}
\par The way of evaluation of the constant $c_\dag$, presented in the proof of Lemma~\ref{Le: Kor}, is rather general and can be expanded
  on the analogous a posteriori error bounds of the finite element method solutions for the  $2n^{\mr{th}}$-order elliptic equations,  $n\ge 1$, see
 Korneev   \cite{Korneev:2017-2}. The most complicated in it is evaluation of the constant $c_{\circ}$.  However, in many cases  such estimates are well known.  For instance,  if the domain is convex, then
   $
     |v|_2 \le \|\Delta v\|_0 \,,
   $
  see Ladyzhenskaya  \cite[(6.5) in ch. II]{Ladyzhenskaya:73a}.  Therefore,  at ${\bf A}={\bf I}$ and $\sigma\equiv 0$  we have $c_{\circ}\le 1$,   and  from (\ref{u-sigma-f+}) we conclude that at least $c_{\circ}\le 2$  for any $\sigma\ge 0$.  Existence of the constant $c_{\circ}$
   poses some conditions on  smoothness of the boundary and  coefficients of the equation (if they are not constant).  At the same time,
   there is the possibility to avoid the mentioned additional restrictions, except  for those related to the suitable approximation operator.  If there exists some interpolation type or other approximation operator    with locally defined approximations for functions from $H^1(\Omega)$, then it is possible to show that the {\em constants in the a posteriori bounds depend only on the local approximation properties of the finite element space}. A good example of such an operator is the
  quasi-interpolation operator of  Scott and Zhang \cite{ScottZhang:90}, which will be used below to illustrate the statement. We start from the description of the properties of this operator needed for our purpose.

\par Let ${\Omega}\subset {\mbb R}^m,\,\, m\ge2,$ be a bounded Lipschitz domain, whch is the domain of the quasiuniform triagulation ${\mc T}_h$ with vertices  $x^{(i)},\,\, i=1,2,\dots, I,$  and simplices $\tau_r$ of diameters not greater $h$.  For simplicity it is assumed that faces of simplices are plain  and that the following quasiuniformity conditions are fulfilled:
\begin{equation}\label{fedisc-reg}
      0 < {c}_\vartriangle \le {\UL{\rho}_r}/{h_r}  \,,
   	\quad  \hat{\alpha}^{(1)}h \le h_r
   	\le  h\,,
\end{equation}
 where ${\UL{\rho}_r}$ and $h_r$  are the radius of the largest inscribed  sphere and the diameter of simplex
  $\tau_{r}$, respectively.
To each vertex $x^{(i)}$, we relate $(m-1)$-dimentional simplex $\tau_i^{(m-1)}$, which is the face of one of the simplices $\tau_r$ having
$x^{(i)}$ for the vertex.  For $m$ vertices of the simplex $\tau_i^{(m-1)}$ we willl use also  notations $z_l^{(i)},\,\,
   l=1,2,\dots,m,\,$ assuming for definiteness  that $z_1^{(i)}=x^{(i)}$. Clearly  the choice of the face $\tau_i^{(m-1)}$ is not unique, but for
   $x^{(i)} \in \bo\Omega$ we always take one of the faces $\tau_i^{(m-1)} \subset \bo\Omega $. We will  formulate the result of Scott and Zhang
  using the simpler notations ${\mbb V}_\vartriangle (\Omega  )$, ${\mc V}_{\mr{tr}}(\bo \Omega)$,  and  $\7{\mbb V}_\vartriangle (\Omega  )$ for the space of continuous piece wise linear functions ${\mbb V}_{h,1}^{\,0}(\Omega)$,  its trace on the boundary, and its subspace of  functions, vanishing on the boundary,  respectively.

  \par  We define functions $\theta_i \in {\mc P}_1(\tau_i^{(m-1)})$, satisfying equations
\begin{equation}
\label{theta} 
  \int_{\tau_i^{(m-1)}} \theta_i \lambda_l^{(i)}\, dx =\delta_{1,l}\,,\quad
    l=1,2,..,m\,,
\end{equation}
 where $\lambda_l^{(i)}$ are the barycentric coordinates in $\tau_i^{(m-1)}$, corresponding to the vertices $z_l^{(i)}$,  and
   $\delta_{i,l}$ is the Kronecker's symbol.  If $\phi_i \in {\mbb V}_\vartriangle (\Omega)$ are the basis functions in $ {\mbb V}_\vartriangle (\Omega)$, defined by the equalities $\phi_i
(x_j)=\delta_{i,j},\,\, i,j=1,2,..,I,\,$ then for any  $v \in
H^1(\Omega)$ the quasi-interpolation ${\mc I}_h v $ is the function
\begin{equation}
\label{I:SZh} 
   {\mc I}_h v=\sum_{i=1}^I \left(\int_{\tau_i^{(m-1)}} \theta_i v\,dx
  \right)\phi_i(x)\,.
\end{equation}
  \begin{lemma}\label{L:ScottZhang}
    The quasi-interpolation operator  $ {\mc I}_h: H^1(\Omega)\rightarrow {\mbb V}_\vartriangle
  (\Omega)$  is a projection and has the following properties:
  \par {a)} \hspace*{3mm} ${\mc I}_h v : H^1(\Omega)\mapsto {\mbb V}_\vartriangle
  (\Omega)$ and, if  $v \in {\mbb V}_\vartriangle
  (\Omega)$, then  ${\mc I}_h v = v$,
  \par {b)}  \hspace*{3mm} $(v- {\mc I}_h v) \in \mathaccent"7017{H}\,\!\!^1(\Omega),\,$
   if  $\,v|_{\bo \Omega} \in {\mc V}_{\mr{tr}}(\bo \Omega)$,
  \par {c)} \hspace*{3.3mm} $\|v- {\mc I}_h v\|_{t,\Omega}\le c_{\mr{sz}}(t,s) h^{s-t} \|v
  \|_{s,\Omega}\,$ for  $ t=0,1,$  $s=1,2$,  and  $\forall v \in H^s(\Omega)$\,,
  \par {d)} \hspace*{3mm} $| {\mc I}_h v |_{1,\Omega}\le \breve{c}_{\mr{sz}}  |\,v\,|_{1,\Omega}\,$  и
$\,\| {\mc I}_h v \|_{1,\Omega}\le \hat{c}_{\mr{sz}}
\|\,v\,\|_{1,\Omega}\,$   $\forall v\in H^1(\Omega)$,
 \newline where
$c_{\mr{sz}}(s,t), \, \breve{c}_{\mr{sz}}$, and $\hat{c}$ are positive constants, depending on
  ${c}_\vartriangle$.
  \end{lemma}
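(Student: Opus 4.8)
The plan is to establish the four stated properties of the Scott–Zhang quasi-interpolation operator $\mathcal{I}_h$ defined in \eqref{I:SZh}. The key structural fact enabling everything is the \emph{dual basis} relation built into \eqref{theta}: the functionals $v\mapsto \int_{\tau_i^{(m-1)}}\theta_i v\,dx$ are biorthogonal to the nodal basis values in the sense that when $v\in {\mbb V}_\vartriangle(\Omega)$ is piecewise linear, the integral $\int_{\tau_i^{(m-1)}}\theta_i v\,dx$ reproduces exactly the nodal value $v(x^{(i)})$. I would verify this first by expanding $v|_{\tau_i^{(m-1)}}=\sum_l v(z_l^{(i)})\lambda_l^{(i)}$ and applying \eqref{theta}, which immediately gives $\int \theta_i v\,dx = v(z_1^{(i)})=v(x^{(i)})$; since $\mathcal{I}_h v = \sum_i(\text{nodal value})\,\phi_i$ then recovers $v$ itself, this proves the projection property and part (a).

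\textbf{For part (b)}, the crucial point is that each $\theta_i$ attached to a boundary vertex is, by the stated convention, integrated over a face $\tau_i^{(m-1)}\subset\partial\Omega$. Hence the coefficient $\int_{\tau_i^{(m-1)}}\theta_i v\,dx$ of the boundary basis function $\phi_i$ depends only on the trace $v|_{\partial\Omega}$. If $v|_{\partial\Omega}\in{\mc V}_{\mr{tr}}(\partial\Omega)$ is already the trace of a piecewise linear function, the same biorthogonality as in (a), applied on the boundary faces, shows that $\mathcal{I}_h v$ matches $v$ on $\partial\Omega$, so $v-\mathcal{I}_h v$ vanishes there and lies in $\mathaccent"7017{H}^1(\Omega)$. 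I would be careful to note that only the boundary nodes contribute to the trace of $\mathcal{I}_h v$, since interior $\phi_i$ vanish on $\partial\Omega$.

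\textbf{For parts (c) and (d)} I would use the standard scaling / Bramble–Hilbert machinery localized to each element. On a reference simplex, the functional defining $\mathcal{I}_h$ is bounded on $H^1$, and because $\mathcal{I}_h$ reproduces linears (by (a)), the local error $\|v-\mathcal{I}_h v\|_{t,\tau_r}$ is controlled by a seminorm $|v|_{s,\omega_r}$ over the local patch $\omega_r$ of elements touching $\tau_r$; the power $h^{s-t}$ comes from the affine scaling to the reference configuration, and the constant depends only on the shape-regularity parameter $c_\vartriangle$ via the quasiuniformity conditions \eqref{fedisc-reg}. Summing the local estimates over $r$, and using that each element lies in only boundedly many patches (again a consequence of shape regularity), yields the global bound (c) for $t=0,1$ and $s=1,2$. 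The stability estimates (d) follow by the triangle inequality $|\mathcal{I}_h v|_1 \le |v|_1 + |v-\mathcal{I}_h v|_1$ together with the $s=t=1$ case of (c), and analogously for the full $H^1$ norm.

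\textbf{The main obstacle} is the patch-overlap bookkeeping in the global summation for (c): because each $\theta_i$ reads data from a single face, the local interpolation error on $\tau_r$ genuinely depends on a neighborhood $\omega_r$ rather than on $\tau_r$ alone, so one must show that $\sum_r |v|_{s,\omega_r}^2 \le C\,|v|_{s,\Omega}^2$ with $C$ depending only on $c_\vartriangle$. This requires the finite-overlap property of the patches, which I would derive from the shape-regularity condition ${c}_\vartriangle \le \UL{\rho}_r/h_r$ in \eqref{fedisc-reg} bounding the number of simplices meeting any vertex. The delicate verification that the face-based functional is $H^1$-bounded (and not merely $H^{1/2}$-bounded on the trace) is handled by the scaled trace inequality on the reference element, which is where the constant's dependence on shape regularity really enters.
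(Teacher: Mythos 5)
Your proposal is correct in outline, but it is worth knowing that the paper does not actually prove this lemma: its ``proof'' is a one\--line citation, deferring entirely to Scott and Zhang \cite{ScottZhang:90} for the general result and to Xu and Zou \cite{XuZou:1998a} for the lemma in exactly this form. What you have written is a faithful reconstruction of the standard argument contained in those references: the biorthogonality \eqref{theta} giving nodal\--value reproduction on piecewise linears (hence the projection property and (a)), the convention that boundary vertices use boundary faces so that the trace of ${\mc I}_h v$ depends only on $v|_{\bo\Omega}$ (hence (b)), and the scaled trace inequality plus Bramble--Hilbert plus finite patch overlap for (c) and (d). You also correctly flag the two genuinely delicate points, namely that the face functional must be bounded on $H^1$ via the trace theorem (not merely defined on smoother functions) and that the patch\--overlap count must be controlled by shape regularity. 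One small refinement you should make explicit if you write this out in full: for the Bramble--Hilbert step on an element $\tau_r$ you need \emph{local} reproduction of globally linear polynomials on the patch $\omega_r$, which does not follow formally from the global projection property (a) but from the observation that the coefficients attached to the vertices of $\tau_r$ are computed from faces lying inside $\omega_r$, so that any polynomial that is linear on all of $\omega_r$ is reproduced on $\tau_r$. With that noted, your route buys an actual self\--contained proof where the paper offers only a pointer to the literature.
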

  \begin{proof} Scott and Zhang \cite{ScottZhang:90} proved a more general result.  In a given form the lemma
    was formulated and proved by Xu and Zou  \cite{XuZou:1998a}.
  \end{proof}
     \begin{theorem}  \label{Th: Kor+1}
 Let  $\Gamma_D=\bo \Omega$,  $\psi_D \equiv 0$,
 $u\in \7{H}^1(\Omega, \Delta)$. Let also the finite element assemblage satisfies the condition
    ${\mc A}$) and generates the space $\7{\mbb V}_{h,1}^{\,0}(\Omega) \subset \7{H}^1(\Omega)$ and
${\bf z} \in {\bf H}(\Omega, {\mr{div}})$.  Then for $v=u_{\mr{fem}}$
 at any  $ \sigma\in [0,1/( c_{\mr{sz}}(0,1) h)^2]$ holds the bound
  \begin{equation}\label{K-231}
               |\!\!\,\!|\!\!\,\!| v-u|\!\,\!\!|\!\,\!\!|^2\le \Theta_{\mr{sz}}  {\mc M}_{\mr{fem}}^{(2)} (\sigma,f,{\bf z})\,,\q {\mc M}_{\mr{fem}}^{(2)} (\sigma,f,{\bf z})= {\mc M} (\sigma,\theta_{\mr{sz}}^{-1},f,u_{\mr{fem}},{\bf z})\,,
   \end{equation}
   where
 \begin{equation}\label{ident-01}
 \Theta_{\mr{sz}}=\frac{1+\wt{c}_{\mr{sz}}^2(1,1)}{1+c_{\mr{sz}}^2(0,1)h^2\sigma }\,, \q  \theta_{\mr{sz}}=c_{\mr{sz}}(0,1)^2h^2\,,
 \end{equation}
  and   $\wt{c}_{\mr{sz}}(1,1)$ is the constant, depending only upon
   ${c}_\vartriangle$ and $\hat{\alpha}^{(1)}$\!, given in  (\ref{ident-31}).
\end{theorem}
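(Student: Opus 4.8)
The plan is to specialize the general majorant of Theorem~\ref{Th: K3} to $v=u_{\mr{fem}}$, letting the Scott--Zhang quasi-interpolant ${\mc I}_h$ play the role that the Aubin--Nitsche duality (with the global regularity constant $c_\circ$) played in Lemma~\ref{Le: Kor}. The decisive structural facts come from Lemma~\ref{L:ScottZhang}: by a) and b), ${\mc I}_h$ is a projection onto $\7{\mbb V}_\vartriangle(\Omega)$ that preserves the homogeneous trace, so ${\mc I}_h u_{\mr{fem}}=u_{\mr{fem}}$, ${\mc I}_h u\in\7{\mbb V}_\vartriangle(\Omega)$, and ${\mc I}_h e\in\7{\mbb V}_\vartriangle(\Omega)$ for $e=e_{\mr{fem}}$. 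Hence ${\mc I}_h e$ is an admissible test function and the Galerkin orthogonality $a_\Omega(e,w)+\sigma(e,w)_\Omega=0$ may be used with $w={\mc I}_h e$. This is what will let me retain only the \emph{local} constants $c_{\mr{sz}}(0,1)$ and $\tilde c_{\mr{sz}}(1,1)$, bypassing any elliptic-regularity constant.

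First I would start from the identity (\ref{G-bound}), written with the residual $R=f-\sigma u_{\mr{fem}}-\nabla\cdot{\bf z}$, and split the reaction integral through ${\mc I}_h$,
\[
\int_\Omega R\,e\,dx=\int_\Omega R\,{\mc I}_h e\,dx+\int_\Omega R\,(e-{\mc I}_h e)\,dx.
\]
On the first piece the finite element equation $(f,w)=a_\Omega(u_{\mr{fem}},w)+\sigma(u_{\mr{fem}},w)$ with $w={\mc I}_h e$, together with integration by parts on $\nabla\cdot{\bf z}$, collapses $\int_\Omega R\,{\mc I}_h e$ into the flux pairing $\int_\Omega({\bf A}\nabla u_{\mr{fem}}+{\bf z})\cdot\nabla{\mc I}_h e\,dx$; this combines with the flux term already present in (\ref{G-bound}) and is controlled by $]\!|{\bf A}\nabla u_{\mr{fem}}+{\bf z}|\![_{{\bf A}^{-1}}$ and $|{\mc I}_h e|_1\le\tilde c_{\mr{sz}}(1,1)|e|_1$, the $H^1$-stability of Lemma~\ref{L:ScottZhang}d) being exactly the source of $\tilde c_{\mr{sz}}^2(1,1)$ in the numerator of $\Theta_{\mr{sz}}$. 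On the second piece the local form of Lemma~\ref{L:ScottZhang}c) with $t=0,\ s=1$ gives $\|e-{\mc I}_h e\|_0\le c_{\mr{sz}}(0,1)h\,|e|_1=\sqrt{\theta_{\mr{sz}}}\,|e|_1$, which is precisely the weight $\theta_{\mr{sz}}$ on $\|R\|_0^2$ prescribed in (\ref{ident-01}).

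With these two bounds in hand I would apply the splitting inequality (\ref{simple}) with the weight $\sigma_*=\theta_{\mr{sz}}^{-1}=(c_{\mr{sz}}(0,1)h)^{-2}$, pairing the flux contribution against $|e|_1,|{\mc I}_h e|_1$ and the residual against $\|e-{\mc I}_h e\|_0$, so that the first Cauchy--Schwarz factor is exactly $[{\mc M}_{\mr{fem}}^{(2)}]^{1/2}$. In the companion factor I would expand $|e+{\mc I}_h e|_1^2$ and substitute the orthogonality relation $a_\Omega(e,{\mc I}_h e)=-\sigma(e,{\mc I}_h e)_\Omega$; the resulting reaction cross-term, after the $\beta$-optimization carried out exactly as in (\ref{K-26}) with the term $\sigma\|e\|_0^2$ kept live throughout, is what supplies the denominator $1+c_{\mr{sz}}^2(0,1)h^2\sigma=1+\kappa$, while the stability constant supplies the numerator $1+\tilde c_{\mr{sz}}^2(1,1)$. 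This delivers $\Theta_{\mr{sz}}$ and the bound (\ref{K-231}); for $\sigma\ge\sigma_*$ the estimate degenerates into Aubin's majorant, as in Theorem~\ref{Th: K3}. The whole scheme parallels Theorem~\ref{Th: Kor} and Lemma~\ref{Le: Kor}, with $c_{\mr{sz}}(0,1)$ in the role of $c_\dag$ and the factor $2$ replaced by $1+\tilde c_{\mr{sz}}^2(1,1)$.

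The main obstacle is the constant bookkeeping that decouples the numerator from the denominator. One must resist the cheap triangle-inequality estimate $|e+{\mc I}_h e|_1\le(1+\tilde c_{\mr{sz}}(1,1))|e|_1$, which would produce the cruder $(1+\tilde c_{\mr{sz}}(1,1))^2$, and instead expand $|e+{\mc I}_h e|_1^2$ through the orthogonality cross-term, since that same term is simultaneously responsible for the clean $1+\tilde c_{\mr{sz}}^2(1,1)$ and for the improving factor $1/(1+\kappa)$; tracking the cross-term $-2\sigma(e,{\mc I}_h e)_\Omega$ against the interpolation error $\|e-{\mc I}_h e\|_0$ so that no spurious additive constant survives is the delicate point. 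The conceptually important payoff---and the reason the constants depend only on $c_\vartriangle$ and $\hat\alpha^{(1)}$ under condition~${\mc A}$)---is that the interpolant ${\mc I}_h e$ stands in for the dual solution of Lemma~\ref{Le: Kor}, so the relation $\|e-{\mc I}_h e\|_0\le c_{\mr{sz}}(0,1)h\,|e|_1$ replaces (\ref{A-Nitsche}) without any global $H^2$ bound entering; confirming that the local approximation and stability of ${\mc I}_h$ genuinely suffice in place of regularity is the crux of the argument.
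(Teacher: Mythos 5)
Your overall skeleton is the right one and matches the paper's: replace the duality/regularity argument of Lemma~\ref{Le: Kor} by Galerkin orthogonality plus the local approximation and stability of a quasi-interpolation, insert a finite element correction $w$ into the error identity (the paper's (\ref{ident-1})--(\ref{ident-2})), choose $\epsilon=\sigma_{\mr{sz}}=(c_{\mr{sz}}(0,1)h)^{-2}$ in the weighted Cauchy inequality, and recover the factor $1/(1+\kappa)$ by the same $\beta$-split as in (\ref{K-26}). But the execution diverges from the paper at a point that actually matters for the stated constants. In the paper the correction operator is \emph{not} ${\mc I}_h$: it is the orthogonal $L_2$-projection $Q$ onto $\7{\mbb V}_{h,1}^{\,0}(\Omega)$ (the choice is $w$ with $e_{\mr{fem}}+w=e_{\mr{fem}}-Qe_{\mr{fem}}$; the paper's ``$w=Qe_{\mr{fem}}$'' is a sign slip). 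The Scott--Zhang operator enters only as an auxiliary device to prove the three bounds (\ref{ident-3}) for $I-Q$, in particular the $H^1$-stability $\|\nabla(\phi-Q\phi)\|_0\le\wt{c}_{\mr{sz}}(1,1)\|\nabla\phi\|_0$ with $\wt{c}_{\mr{sz}}(1,1)=\breve{c}_{\mr{sz}}+2c_{1,0}c_{\mr{sz}}(0,1)$ from (\ref{ident-31}), which involves an inverse-inequality constant $c_{1,0}$ that your version never produces. More seriously, the step in (\ref{ident-4}) that yields the denominator $1+\kappa$ is $\beta\sigma\|e-Qe\|_0^2\le\beta\sigma\|e\|_0^2$, i.e.\ the non-expansiveness of $I-Q$ in $L_2$, which holds because $Q$ is an \emph{orthogonal} projection. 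For ${\mc I}_h$ no $L_2$-stability is available in Lemma~\ref{L:ScottZhang}, so with your choice $w=\pm{\mc I}_h e$ this step fails; you would have to absorb that term through $\|e-{\mc I}_h e\|_0\le c_{\mr{sz}}(0,1)h|e|_1$ instead, losing the $\sigma\|e\|_0^2$ contribution in the companion factor and hence the improvement $1/(1+\kappa)$ (or paying extra constants for it).

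Two further points. First, your proposed mechanism for the numerator --- expanding $|e+{\mc I}_h e|_1^2$ and substituting the Galerkin cross-term $a_\Omega(e,{\mc I}_h e)=-\sigma(e,{\mc I}_h e)_\Omega$ --- is not what produces $1+\wt{c}_{\mr{sz}}^2(1,1)$ in the paper, and you do not show it closes: it generates terms like $\|\nabla{\mc I}_h e\|_0^2$ and $\sigma(e,{\mc I}_h e)$ that again call for $L_2$-bounds on ${\mc I}_h e$ you do not have. In the paper the numerator is simply $\wt{c}_{\mr{sz}}^2(1,1)\|\nabla e\|_0^2$ (from the gradient bound on $e-Qe$) plus one more $\|\nabla e\|_0^2$ (from converting the leftover $(\sigma_{\mr{sz}}-\beta\sigma)\|e-Qe\|_0^2$ via the approximation bound); no cross-term expansion is needed. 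Second, your sign bookkeeping is off: the residual must be paired with the \emph{small} quantity $e-{\mc I}_h e$ while your flux term carries $e+{\mc I}_h e$, and the asymmetric pairing you then feed into (\ref{simple}) is not the paper's symmetric pairing against $e-Qe$ in both slots. The conceptual payoff you identify --- that local interpolation estimates replace (\ref{A-Nitsche}) and no global $H^2$ regularity is needed --- is exactly right; the gap is that to get the theorem with the constants of (\ref{ident-01}) you must route the argument through the $L_2$-projection and the Bramble--Xu-type stability bound (\ref{ident-3}), not through ${\mc I}_h$ directly.
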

\begin{proof}
 For any $w\in \7{\mbb V}_{h,1}^{\,0}(\Omega)$ we have the equality
    \begin{equation}\label{ident-1}
\begin{array}{c}
    |\!\!\,\!|\!\!\,\!|e_{\mr{fem}} |\!\,\!\!|\!\,\!\!|^2= \int_\Omega \big[{ \nabla} (e_{\mr{fem}})\cdot \nabla (e_{\mr{fem}}) +
    \sigma e_{\mr{fem}}e_{\mr{fem}} \big]= \\  \vspace{-3mm}
    \\= \int_\Omega \big[(\nabla u_{\mr{fem}}+{\bf z})\cdot \nabla (e_{\mr{fem}}+w) -  ({\bf z}+\nabla u)\cdot
    \nabla (e_{\mr{fem}}+w)+  \\  \vspace{-3mm}
     \\ +
     \sigma (u_{\mr{fem}}-u)(e_{\mr{fem}}+w) \big]\,.
  \end{array}
\end{equation}
 Integration by parts of the second summand in square brackets of the right part and application of the Cauchy inequality
 with $\epsilon >0$  result in the inequality
    \begin{equation}\label{ident-2}
    \begin{array}{l}
|\!\!\,\!|\!\!\,\!|e_{\mr{fem}} |\!\,\!\!|\!\,\!\!|^2=\int_\Omega \big[(\nabla u_{\mr{fem}}+
{\bf z})\cdot \nabla (e_{\mr{fem}}+w)+
 \big( {\mr{div}} \,{\bf z}+\Delta u+\sigma (u_{\mr{fem}}-u)\big)(e_{\mr{fem}}+w) \big] \le
    \\  \vspace{-3mm}
    \\
   \Big\{ \|\nabla u_{\mr{fem}}+{\bf z}\|_0^2 +\frac{1}{\epsilon}\|f-\sigma u_{\mr{fem}} - {\mr{div}} \,{\bf z}\|_0^2\Big\}^{1/2}
   \Big\{ \|\nabla (e_{\mr{fem}}+w)\|_0^2
   + \epsilon\| e_{\mr{fem}}+w\|_0^2 \Big\}^{1/2}
   \end{array}
\end{equation}
\par According to Lemma~\ref{L:ScottZhang} and the definition of the operator   $Q$ of   $L_2$-projection upon   $ \7{\mbb V}_{h,1}^{\,0}(\Omega)$,
   for  $\phi -Q\phi$ with  any $\phi\in H^1(\Omega)$,   there are valid the bounds
    \begin{equation}\label{ident-3}
    \begin{array}{l}
      \|\phi -Q\phi\|_0 \le \|\phi \|_0\,,   \\  \vspace{-3mm}
    \\
     \|\phi -Q\phi\|_0 \le c_{\mr{sz}}(0,1) h \|\nabla \phi \|_0\,,   \\  \vspace{-3mm}
    \\
     \|\nabla(\phi -Q\phi)\|_0 \le \wt{c}_{\mr{sz}}(1,1)  h \|\nabla \phi \|_0 \,,
     \end{array}
\end{equation}
   in which the  constant $\wt{c}_{\mr{sz}}(1,1) $ depends only on ${c}_\vartriangle$ and $\hat{\alpha}^{(1)}$.
   The proof is needed only  for  the last bound, and it  follows from the relations
  $$
  \begin{array}{c}
  \|\nabla(\phi -Q\phi)\|_0 \le \|\nabla(\phi -{\mc I}_h\phi)\|_0  +\|\nabla ({\mc I}_h\phi -Q\phi)\|_0\le \breve{c}_{\mr{sz}}\|\nabla \phi \|_0 +\\
 \vspace{-3mm}
    \\
  c_{1,0}h^{-1}\|{\mc I}_h\phi -Q\phi\|_0 \le \breve{c}_{\mr{sz}}\|\nabla \phi \|_0 + c_{1,0}h^{-1} \Big[ \|{\mc I}_h\phi -\phi \|_0
  +\|\phi  -Q\phi\|_0 \Big] \le
  \\
 \vspace{-3mm}
    \\
    \Big(\breve{c}_{\mr{sz}}+ 2c_{1,0}  c_{\mr{sz}}(0,1)    \Big)\|\nabla \phi \|_0 \,,
  \end{array}
  $$
  where $c_{1,0}$ is the constant in the inverse inequality
$$
\|\nabla ({\mc I}_h\phi -Q\phi)\|_0\le c_{1,0}h^{-1}\|{\mc I}_h\phi -Q\phi\|_0\,.
$$
Therefore,
  \begin{equation}\label{ident-31}
                      \wt{c}_{\mr{sz}}(1,1) =\breve{c}_{\mr{sz}}+ 2c_{1,0}  c_{\mr{sz}}(0,1) \,.
  \end{equation}
  \par It is worth noting,  that the third inequality (\ref{ident-3}), indicating the stability in $H^1(\Omega)$ of  $L_2$-projection,  was proved
   by Bramble and Xu  \cite{BrambleXu:91} in a different way with a different way of evaluationg the constant $\wt{c}_{\mr{sz}}(1,1)$.

  \par For the reason that  $w=Qe_{\mr{fem}}\in \7{\mbb V}_{h,1}^{\,0}(\Omega) $, it can be adopted  $w=Qe_{\mr{fem}}$.  Combining with
    (\ref{ident-3})  and setting   $\epsilon=\sigma_{\mr{sz}}:= (c_{\mr{sz}}(0,1) h)^{-2}$ leads the bound
    \begin{equation}   \label{ident-4}
    \begin{array}{c}
     \|\nabla (e_{\mr{fem}}+w)\|_0^2
   + \sigma_{\mr{sz}}\| e_{\mr{fem}}+w\|_0^2=\|\nabla (e_{\mr{fem}}+w)\|_0^2+\beta\sigma \| e_{\mr{fem}}+w  \|_0^2 +
   \\  \vspace{-3mm}
    \\
     +(\sigma_{\mr{sz}}-\beta\sigma) \| e_{\mr{fem}}+w\|_0^2 \le  \wt{c}_{\mr{sz}}^2(1,1)\|\nabla e_{\mr{fem}}\|_0^2 +\beta\sigma \| e_{\mr{fem}}  \|_0^2
     +  \frac{\sigma_{\mr{sz}} -\beta\sigma}{\sigma_{\mr{sz}}} \|\nabla e_{\mr{fem}}\|_0^2  \,.
   \end{array}
\end{equation}
   On the basis of (\ref{ident-4})  we conclude that
    \begin{equation}\label{ident-5}
    \|\nabla (e_{\mr{fem)}}+w\|_0^2
   + \sigma_{\mr{sz}} \| e_{\mr{fem}}+w\|_0^2\le \frac{1+  \wt{c}_{\mr{sz}}^2(1,1)}{1+ \kappa  }
   \big[\,\|\nabla e_{\mr{fem}}\|_0^2+
   \sigma \| e_{\mr{fem}}  \|_0^2 \, \big]
    \end{equation}
 with $\kappa=\sigma/\sigma_{\mr{sz}}$.  Now from (\ref{ident-2}) and (\ref{ident-5}) the theorem follows .
\end{proof}
\begin{remark}\label{Re-11}
   Quasi-interpolation operator $ {\mc I}_h$  is defined in \cite{XuZou:1998a} on triangultions, satisfying the conditions of the shape regularity
   $$
    0 < {c}_\vartriangle \le{\UL{\rho}_r}/{h_r}, \,\q h_r\le  h
    $$
    with preserving the properties  a), b) and the properties с), d) taking the form
\begin{equation}\label{SZh-geo}
\begin{array}{l}

\|v- {\mc I}_h v\|_{t,\tau_r}\le c_{\mr{sz}}(t,s) h_r^{s-t} \|v
    \|_{s,\delta_r}\,,\q   t=0,1, \,\, s=1,2,\, \, \,\,\,  \forall v\in H^1(\delta_r)\,,
  \\  \vspace{-3mm}
    \\
  | {\mc I}_h v |_{1,\tau_r}\le \breve{c}_{\mr{sz}}  |\,v\,|_{1,\delta_r} \,\,\, \mbox{and}\,\,\,
    \| {\mc I}_h v \|_{1,\tau_r}\le \hat{c}_{\mr{sz}} \|\,v\,\|_{1,\delta_r}\,\,\,  \forall v\in H^1(\delta_r) \,,
    \end{array}
  \end{equation}
  where $c_{\mr{sz}}(t,s)={\mr{const}}$,  ${\delta}_r={\mr{interior}}\{ \cup_\varkappa \OL{\tau}_\varkappa:
      \OL{\tau}_\varkappa\cap
      \OL{\tau}_r\neq \varnothing\}$  and  $r=1,2,\dots,{\mc R}$. More over, these authors designed also the quasi-interpolation operators
        $ {\mc I}_{h,p}: H^1(\Omega)\rightarrow {\mbb V}_{h,p}(\Omega)$, for which again the properties a), b) are preserved, but in (\ref{SZh-geo})
        $s=1,2,\dots,p+1$.  These facts lead to some important generalizations  of Theorem~\ref{Th: Kor+1}. One of them is the expansion
        of the a posteriori error bounds  (\ref{K-231})-(\ref{ident-01}) on the solutions by the finite element methods from the spaces ${\mbb V }_{h,p}(\Omega), \,\, p>1$. In this case, the constants related to $ {\mc I}_h$  must be replaced by the respective constants  related to  the operator $ {\mc I}_{h,p}$.  Other  possibility  is the generalization on the case of piece wise constant  reaction coefficient $\sigma\big|_{\tau_r}\!=\!\sigma_r\!=\!{\mr{const}}\!\ge\!0$.
   \end{remark}

  \section{\normalsize  Consistency with a priori bounds, inverse like bound
      } \label{Se:cons-inverse}  

\par  This Section is dedicated to the properties of the suggested a posteriori error majorants,
which are important for the construction of the convergent adaptive algorithms.
   \vspace{0.2cm}

\par  {\bf \em 4.1.  Consistency}
    \par   More accurate a posteriori  majorants (\ref{K-fem+1}), (\ref{K-fem+2}) and (\ref{ident-01}) of the finite element methods errors
    are consistent with the unimprovable a priori error estimates.  In order to become convinced in this, let us first present the a priori approximation error estimates.
    \par \par  If $v\in H^l(\Omega)$  and its finite element approximation $\wt{v}$ belongs to   ${\mbb V}_h(\Omega)={\mbb V}_{h,p}^0(\Omega), \,\,1\le l\le p+1,$  then there are several ways to find such $\wt{v}$ that
 \begin{equation}\label{approx+}
             \|v - \wt{v}
             \|_{k,\Omega} \le c_{k,l}h^{l-k} \| v\|_{l,\Omega}\,,  \q   k=0,1\,,\quad c_{k,l}={\mr{const}}\,,
\end{equation}
  where $c_{k,l}=c_{k,l}(k,l, c_\vartriangle, \hat{\alpha}^{(1)})$, if the condition   ${\mc A}$)  is fulfilled. In a more general case of the curvilinear finite elements  $ c_\vartriangle, \hat{\alpha}^{(1)}$  can be understood as the corresponding characteristics  from the conditions of generalized quasiuniformity.  The bounds (\ref{approx+}) can be found, {\em e.g.}, in \cite{Ciarlet:1978a, Korneev:77, ScottZhang:90}. At $p=1$, $l=1,2$, and $\wt{u}={\mc I}_hu$  they are a concequence, for instance, of Lemma~\ref{L:ScottZhang}. In general,  if $v=u$ is the solution of the reaction-diffusion problem, then
    for  $\wt{v}$, as it follows from  (\ref{L_2_K}), one can  take  any of functions  $u_\circ,\,  u_{\mr{fe}}$, or  $u_s$.
\begin{lemma}  \label{Le: Kor3}
    Let $\Gamma_D=\bo \Omega$, $\sigma\equiv {\mr{const}}\ge 0$, $\psi_D \equiv 0$,   $u\in \7{H}^1(\Omega)\cap H^l(\Omega)$. Let also
   finite element assemblage
  generates the space ${\mbb V}_{h,p}^0(\Omega),\,\,p\ge 1$. Then for $1\le l\le p+1$ there are hold the bounds
    \begin{eqnarray}
        \| e_{\mr{fem}} \|_{0} &\le & c_{0,l} h^{l}\|u
        \|_{l}\,,    \label{fem-conv-0}
       \vspace*{-4mm}
                 \\
          \| e_{\mr{fem}} \|_{\bf A} & \le & \sqrt{\mu_2}c_{1,l} h^{l-1}\|u
            \|_{l}\,.  \label{fem-conv-A}
     \end{eqnarray}
      with the constants  $c_{k,l}$ from (\ref{approx+}).
      Besides, for $\sigma \le c_\dag^2 h^{-2}$  and for  $\sigma \ge c_\dag^2 h^{-2},\,\, f\in L_2(\Omega)$, respectively we have the bounds
    \begin{eqnarray}
  |\!\!\,\!|\!\!\,\!| e_{\mr{fem}}|\!\,\!\!|\!\,\!\!|^2   & \le & (c_{0,l}^2+\mu_2c_{1,l}^2 ) h^{2(l-1)}\|u
            \|_{l}^2\,,    \label{fem-conv-e1}
            \vspace*{-4mm}
                 \\
                 |\!\!\,\!|\!\!\,\!| e_{\mr{fem}}|\!\,\!\!|\!\,\!\!|^2   & \le & \mu_2c_{1,l} ^2 h^{2(l-1)}\|u
            \|_{l}^2+ \sigma^{-1} \|f\|_0^2 \,. \label{fem-conv-e2}
 \end{eqnarray}
    \end{lemma}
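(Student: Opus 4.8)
The plan is to use that the finite element solution $u_{\mr{fem}}$ is the Galerkin projection of $u$ in the energy inner product, so that $\vertiii{e_{\mr{fem}}}\le\vertiii{u-\phi}$ for every $\phi\in\7{\mbb V}_{h,p}^{\,0}(\Omega)$, and then to lean on three facts already at hand: the quasi-optimality (``cross-over'') inequalities (\ref{L_2_K}) of Lemma~\ref{Le: Kor}, the approximation property (\ref{approx+}), and the Aubin--Nitsche bound (\ref{A-Nitsche}) together with the $H^1$-stability of the $L_2$-projection recorded in (\ref{ident-3}). I would first prove the two \emph{decoupled} a priori estimates (\ref{fem-conv-0}) and (\ref{fem-conv-A}), neither of whose right-hand sides carries a $\sigma$-dependent term, and only then assemble the combined energy-norm bounds (\ref{fem-conv-e1}) and (\ref{fem-conv-e2}) by a case split at the critical value $\sigma_*$ of Theorem~\ref{Th: Kor}.

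For the energy estimate (\ref{fem-conv-A}) I would start from the second line of (\ref{L_2_K}), namely $\|e_{\mr{fem}}\|_{\bf A}\le\|e_\circ\|_{\bf A}$, where $u_\circ=Qu$ is the $L_2$-best approximation. Writing $\|e_\circ\|_{\bf A}\le\sqrt{\mu_2}\,|u-Qu|_1$ and invoking the $H^1$-stability of the $L_2$-projection (\ref{ident-3}), which combined with the best $H^1$-approximation (\ref{approx+}) yields $|u-Qu|_1\le c_{1,l}h^{l-1}\|u\|_l$, I obtain (\ref{fem-conv-A}). For the $L_2$ estimate (\ref{fem-conv-0}) I would feed (\ref{fem-conv-A}) into the Aubin--Nitsche duality bound (\ref{A-Nitsche}), $\|e_{\mr{fem}}\|_0\le c_\dag h\|e_{\mr{fem}}\|_{\bf A}$, to get a quantity of order $h^l$; equivalently one may route through the first line of (\ref{L_2_K}), $\|e_{\mr{fem}}\|_0\le\|e_{\mr{fe}}\|_0$, and bound the $L_2$-error of the Ritz projection by the same duality argument of Lemma~\ref{Le: Kor}.

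To assemble the combined bounds I would write $\vertiii{e_{\mr{fem}}}^2=\|e_{\mr{fem}}\|_{\bf A}^2+\sigma\|e_{\mr{fem}}\|_0^2$. In the regime below the stated threshold, where $\sigma h^2=O(1)$, I bound the diffusion part by (\ref{fem-conv-A}) and the reaction part by (\ref{fem-conv-0}), using $\sigma\|e_{\mr{fem}}\|_0^2\le(\sigma h^2)\,c_{0,l}^2h^{2(l-1)}\|u\|_l^2$ to reduce its order to $h^{2(l-1)}$; this produces (\ref{fem-conv-e1}). In the regime above the threshold with $f\in L_2(\Omega)$, I keep (\ref{fem-conv-A}) for the diffusion part and test the weak form of (\ref{poiss}) with $u$ itself to get the energy identity $\vertiii{u}^2=(f,u)\le\sigma^{-1}\|f\|_0^2$ (the last step since $\sigma\|u\|_0\le\|f\|_0$); as $0\in\7{\mbb V}_{h,p}^{\,0}(\Omega)$, minimality gives $\sigma\|e_{\mr{fem}}\|_0^2\le\vertiii{e_{\mr{fem}}}^2\le\vertiii{u}^2\le\sigma^{-1}\|f\|_0^2$, and summing the two parts gives (\ref{fem-conv-e2}).

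The hardest point is obtaining the decoupled estimates with clean, $\sigma$-free constants. A direct C\'ea bound for the full form $a_\Omega(\cdot,\cdot)+\sigma(\cdot,\cdot)$ only gives $\|e_{\mr{fem}}\|_{\bf A}^2\le\mu_2c_{1,l}^2h^{2(l-1)}\|u\|_l^2+\sigma c_{0,l}^2h^{2l}\|u\|_l^2$, so the energy error is contaminated by a reaction term; it is exactly the cross-over inequalities (\ref{L_2_K}) that strip this term away and render the a priori orders unimprovable, hence consistent with the a posteriori majorants of Sections~\ref{Se:consist}--\ref{Se:FEM}. The only remaining delicacy is bookkeeping of constants: the factors actually produced are of the form $\sqrt{\mu_2}c_{1,l}$ and $c_\dag\sqrt{\mu_2}c_{1,l}$ rather than the bare $c_{1,l},c_{0,l}$ of the statement, and I would absorb these into the generic order-$l$ approximation constants of (\ref{approx+}).
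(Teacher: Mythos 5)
Your proposal is correct and follows essentially the same route as the paper, whose proof is a one-line citation of exactly the ingredients you deploy: the cross-over inequalities (\ref{L_2_K}) to decouple the $L_2$- and ${\bf A}$-norm errors, the approximation property (\ref{approx+}) together with the $H^1$-stability of the $L_2$-projection from Lemma~\ref{L:ScottZhang}/(\ref{ident-3}) for (\ref{fem-conv-A}), the Aubin--Nitsche bound (\ref{A-Nitsche}) of Lemma~\ref{Le: Kor} for (\ref{fem-conv-0}), and the estimate (\ref{u-sigma-f}) via $\vertiii{e_{\mr{fem}}}\le\vertiii{u}$ for (\ref{fem-conv-e2}). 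The constant-bookkeeping caveat you raise (factors such as $\sqrt{\mu_2}c_{1,l}$, $c_\dag$, and the stability constant appearing in place of the bare $c_{k,l}$, and $\sigma h^2\le 1$ being needed for the literal constant in (\ref{fem-conv-e1})) is a genuine looseness, but it is present in the paper's own statement and does not affect the orders in $h$.
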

    \begin{proof} Lemma follows from  (\ref{L_2_K}),  (\ref{approx+}), (\ref{u-sigma-f})  and Lemms~\ref{Le: Kor} and  \ref{L:ScottZhang}.
  \end{proof}
  \par  Consistency of the a posteriori  error bounds (\ref{K-fem+1}), (\ref{K-fem+2}) and (\ref{K-231})-(\ref{ident-01}) with the a priori bound (\ref{fem-conv-e1}) is established in the same way, and for the finite element solutions of an increased smootheness $u_{\mr{fem}}\in {\mbb V}_{h,p}^1(\Omega)\subset H^2(\Omega)$ it is practically evident. Note, that for such solutions additionally
 \begin{equation}\label{approx+2l}
             \|u - u_{\mr{fem}}
             \|_{2} \le \hat{c}_{2,l}h^{l-2} \| u\|_{l,\Omega}\,, \q l\ge 2, \quad \hat{c}_{2,l}={\mr{const}}\,.
\end{equation}
   For the proof it is sufficient to implement the inverse inequality, the inequalities (\ref{A}), the second inequality (\ref{L_2_K})
   and approximation estimates  (\ref{approx+}):
   $$
\begin{array}{c}
\| e_{\mr{fem}} \|_{2} \le \|u - u_{\mr{int}} \|_{2}+\|u_{\mr{int}} - u_{\mr{fem}} \|_{2}\le \|u - u_{\mr{int}} \|_{2}+
ch^{-1}\|u_{\mr{int}} - u_{\mr{fem}} \|_{1}\le \\
\vspace*{-3mm}
                 \\
   \le  \|e_{\mr{int}} \|_{2}+ c_{\mr{i}}h^{-1}(\|e_{\mr{int}}  \|_{1}+ \sqrt{\frac{1}{\mu_1}}
\|e_\circ \|_{\bf{A}}) \le  \\
\vspace*{-3mm}
                 \\  \le h^{l-2}\big[c_{2,l}+c_{\mr{i}}c_{1,l}(1 +\sqrt{\frac{\mu_2}{\mu_1}} ) \big] \| u\|_{l,\Omega} =\hat{c}_{2,l}h^{l-2} \| u\|_{l,\Omega}\,.
\end{array}
$$
\par  Let us introduce for the right part of (\ref{K-fem+1}) the notation
      $$
      \eta^2_1(e_{\mr{fem}})  =\frac{2}{1+{\mr{c}}_\dag^{2} h^2\sigma  }{\mc M}_{\mr{fem}}^{(1)} (\sigma,{f},{\bf z})\,.
    $$
  In case $u_{\mr{fem}}\in {\mbb V}_{h,p}^1(\Omega)$ it can be adopted ${\bf z}={\bf z}_{\mr{fem}}=-{\bf A}\nabla {\bf z}_{\mr{fem}}$,
  and the diffusion component in $\eta_1(e_{\mr{fem}}) $ vanishes, {\em i.e.},  $ ]\!| {\bf A}\nabla\,u_{\mr{fem}} +{\bf z}_{\mr{fem}} |\![_{{\bf A}^{-1}}=0$. Therefore, taking into account the inequality $\sigma\le 1/({\mr{c}}_{\dag}h)^2$, the first inequality  (\ref{L_2_K}), and the estimates   (\ref{fem-conv-0}),  (\ref{approx+2l}),  we will have
 \begin{equation}\label{consist}
 \begin{array}{c}
  \eta^{(1)}(e_{\mr{fem}})
 \le \sqrt{2}{\mr{c}}_{\dag}h\|{f}-\sigma u_{\mr{fem}}  - {\mr{div}} \,{\bf z}\|_{0}\le  \sqrt{2} {\mr{c}}_{\dag} h
 \| {\mathfrak L}e_{\mr{fem}}\|_0  \le
 \\
  \vspace*{-3mm}
               \\
     \le
 \sqrt{2} \Big[\frac{1}{ {\mr{c}}_{\dag}h} \|e_{\mr{fe}}\|_0+{\mr{c}}_{\dag}\mu_2 h | e_{\mr{fem}}|_2 \Big]\le
  \sqrt{2}  h^{l-1} \Big[ \frac{c_{0,l}}{ {\mr{c}}_{\dag}}
   +
  {\mr{c}}_{\dag}  c_{2,l}\mu_2\Big]\|u\|_l\,.
 \end{array}
\end{equation}
   In the order of $h$   this bound is the same as the unimprovable in the order a priori bound   (\ref{fem-conv-e1}).

\par   Suppose $u_{\mr{fem}}\in {\mbb V}_{h,p}(\Omega)\subset H^1(\Omega),\,\, {\mbb V}_{h,p}(\Omega)\nsubseteq H^2(\Omega)$. Then it is natural to require that for the recovered flax ${\bf z}$ the same in the order estimates of convergence, as the ones reflected for the flax ${\bf z}_{\mr{fem}}$ in Lemma~\ref{Le: Kor3},  were hold
alongside with the  estimate
$$
\| \mr{div} (-{\bf A}\nabla u- {\bf z})
             \|_{0} \le \tilde{c}_{2,l}h^{l-2} \| u\|_{l,\Omega},\,\, \tilde{c}_{2,l}={\mr{const}}\,,
             $$
corresponding to  (\ref{approx+2l}).  Note, that the latter estimate follows from the former in the same way  as (\ref{approx+2l}) follows from
(\ref{approx+}).  If the pointed out requirements are fulfilled the bound
     \begin{equation}\label{consist+1}
     \eta_1(e_{\mr{fem}})
     \le c_1 h^{l-1} \|u\|_l, \,\, c_1={\mr{const}}\,,
     \end{equation}%
  is proved  similarly to the similar bound  (\ref{consist}).
      \par It is worth noting that the a posteriori bounds, derived in this work, differ from a number of known bounds only in the coefficients
      before the norms in their right parts. Our coefficients have smaller or the same as earlier known orders.   For this reason the efficient flax recovery algorithms, suggested earlier,  are efficient  for our bounds
      as well.

      \par  As was mentioned above, an a posteriori bound is  unimprovable in the  order, if it is  consistent with the a priori bound unimprovable in the  order.  This means that in the class of solutions, for which the a priori bound is unimprovable in the  order,
      there exist such that
     \begin{equation}\label{consist+2}
                 \eta_1(e_{\mr{fem}}) \le  {\mbb C}_\star |\!\!\,\!|\!\!\,\!|\, e_{\mr{fem}}\,|\!\,\!\!|\!\,\!\!|\,.
    \end{equation}%
Indeed, let $f\in L_2(\Omega)$, $d=2$, and the inequality (\ref{2-f}) is fulfilled. Then the inequalities  (\ref{fem-conv-e1}) с $l=2$ and
 (\ref{consist+1})  are fulfilled as well.  At the same time,  such $f\in L_2(\Omega)$ exists that
     \begin{equation}\label{consist+3}
     \begin{array}{c}
   h\|u\|_2 \le c_2\,  \inf_{\phi\in {\mbb V}_h(\Omega)} \|\nabla(u-\phi)\|_0=c_2\, \|\nabla e_{\mr{fe}}\|_0\le
    \\
  \vspace*{-3mm}
               \\
    c_2\, \|\nabla e_{\mr{fem}}\|_0\le c_2\, |\!\!\,\!|\!\!\,\!|\, \nabla  e_{\mr{fem}}\,|\!\,\!\!|\!\,\!\!|\,,
     \q c_2={\mr{const}}\,.
   \end{array}
\end{equation}%
     The first of these inequalities follows from the estimates of the $N$-width of the compuct of functions $v\in \7{H}^1(\Omega), \,\, \|v\|_1=1$,  for $N=h^{-2}$, see  \cite[Гл. 4, п. 4.1]{OganesianRuhovets:1979}.  In turn, from  (\ref{consist+1}), (\ref{consist+3}) follows (\ref{consist+2}),  which together with (\ref{K-fem+1}) yield the two sided bound
     \begin{equation}\label{consist+4}
         |\!\!\,\!|\!\!\,\!|\, e_{\mr{fem}}\,|\!\,\!\!|\!\,\!\!|^2\le
                 \eta_1(e_{\mr{fem}}) \le {\mbb C}_\star |\!\!\,\!|\!\!\,\!|\, e_{\mr{fem}}\,|\!\,\!\!|\!\,\!\!|^2\,.
    \end{equation}%

\vspace{0.2cm}

\par  {\bf \em 4.2.  Inverse like inequality}
\par Derivation of a posteriori error majorants for numerical solutions was often accompanied by creation of the flax recovery alorithms,
including algorithms with the equilibration,  and by the proofs with their help of the inverse like and the local effectiveness bounds, see \cite{ErnStephansenVohralik:2009, Cai and S. Zhang :2010,
 CreuseNicaise:2010, AinsworthVejchodsky:2011, AinsworthVejchodsky:2015,  CareyCarey:2011, CarstensenMerdon:2013}.  As was already mentioned, some of these results, accompanying  majorants obtained by other authors, are strightforwardly  expandable on the  majorants, suggested in this paper.  More over,  the range of admissible flaxes, for which, {\em e.g.}, the inverse like bounds  hold, can be widened, because the equilibration of flaxes can became unnecessary due to a more adequate representation of the residual terms in the majorants. We illustrate these inferences by one example.
 \begin{theorem} \label{Th:45} 
  Let the conditions of Theorem~\ref{Th: Kor}  are fulfilled  and ${\bf A}={\bf I}$, $u_{\mr{fem}} \in {\mbb V }_{h,1}(\Omega)$. Let  also  the  vector-function  ${\bf z}\in {\mbf W}_{h,2}(\Omega, {\mr{div}})$
    be defined as  ${\bf L}^2$-projection of  the  vector-function    ${\bf z}_{\mr{fem}}$ on  the space
   ${\mbf W}_{h,2}(\Omega, {\mr{div}})$.  Then for $k=1,2$
  \begin{equation}\label{002}
       {\mc M}_{\mr{fem}}^{(k)}(\sigma,f,{\bf z})\le {\mbb C} \big[\,|\!\!\,\!|\!\!\,\!| e_{\mr{fem}}|\!\,\!\!|\!\,\!\!|^2+
       \sum_{\varkappa=1}^{\mc R} \frac{h_\varkappa^2}{\pi^2}\int_{\tau_\varkappa} (f-\Pi_\varkappa^1 f)^2dx \big]\,  
\end{equation}
  with the constant ${\mbb C}={\mbb C}(\Omega,{c}_\vartriangle)$.
  \end{theorem}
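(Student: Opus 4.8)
The plan is to bound $\mathcal{M}_{\mr{fem}}^{(k)}$ by comparing the projected flux $\mathbf{z}$ against the equilibrated Ainsworth--Vejchodsky flux of Theorem~\ref{Th:A-V}, for which the two-sided control (\ref{AiVe-2}) is already available. Write $\mathbf{z}_{\mr{AV}}\in\mathbf{W}_{h,2}(\Omega,\mr{div})$ for that flux (in its sign convention $\mathbf{z}_{\mr{AV}}\approx\nabla u$), set $\tilde{\mathbf{z}}=-\mathbf{z}_{\mr{AV}}\in\mathbf{W}_{h,2}(\Omega,\mr{div})$, and recall that our $\mathbf{z}$ is the $\mathbf{L}^2$-projection of $\mathbf{z}_{\mr{fem}}=-\nabla u_{\mr{fem}}$. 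First I would note that the hypothesis $0\le\sigma\le\sigma_*=(c_\dag h)^{-2}$ together with quasiuniformity ($h_r\le h$) gives $\sigma h_r^2\le\sigma h^2\le c_\dag^{-2}$, so (for $c_\dag\ge 1$) every element lies in $\mathcal{R}_*$ and the equilibration (\ref{AiVe-0}) holds everywhere: $\mr{div}\,\mathbf{z}_{\mr{AV}}=\sigma u_{\mr{fem}}-\hat f$ on each $\tau_r$, while the indicator reduces to $\eta_{\tau_r}^2(\mathbf{z}_{\mr{AV}})=\|\mathbf{z}_{\mr{AV}}-\nabla u_{\mr{fem}}\|_{L_2(\tau_r)}^2$.

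For the diffusion term I would use optimality of the $\mathbf{L}^2$-projection: since $\tilde{\mathbf{z}}\in\mathbf{W}_{h,2}(\Omega,\mr{div})$ is admissible, $\|\nabla u_{\mr{fem}}+\mathbf{z}\|_0=\mr{dist}_{\mathbf{L}^2}(-\nabla u_{\mr{fem}},\mathbf{W}_{h,2})\le\|-\nabla u_{\mr{fem}}-\tilde{\mathbf{z}}\|_0=\|\mathbf{z}_{\mr{AV}}-\nabla u_{\mr{fem}}\|_0=\eta_\Omega(\mathbf{z}_{\mr{AV}})$, and (\ref{AiVe-2}) bounds the right-hand side by $\mathbb{C}\,[\,\vertiii{e_{\mr{fem}}}^2+\sum_r\mr{osc}_{\tau_r}^2(f)\,]$.

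The residual term is the crux. Using $\mr{div}\,\tilde{\mathbf{z}}=\hat f-\sigma u_{\mr{fem}}$ I would rewrite $f-\sigma u_{\mr{fem}}-\mr{div}\,\mathbf{z}=(f-\hat f)-\mr{div}(\mathbf{z}-\tilde{\mathbf{z}})$, so that the genuine equilibration residual cancels and only the data oscillation and the divergence of a \emph{finite-element} flux difference survive. Since $\mathbf{z}-\tilde{\mathbf{z}}\in\mathbf{W}_{h,2}(\Omega,\mr{div})$ is piecewise polynomial, the inverse inequality yields $h\,\|\mr{div}(\mathbf{z}-\tilde{\mathbf{z}})\|_0\le c_{\mr{inv}}\|\mathbf{z}-\tilde{\mathbf{z}}\|_0$, and the triangle inequality combined with projection optimality bounds $\|\mathbf{z}-\tilde{\mathbf{z}}\|_0\le\|\mathbf{z}+\nabla u_{\mr{fem}}\|_0+\|\tilde{\mathbf{z}}+\nabla u_{\mr{fem}}\|_0\le 2\|\mathbf{z}_{\mr{AV}}-\nabla u_{\mr{fem}}\|_0=2\,\eta_\Omega(\mathbf{z}_{\mr{AV}})$. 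Because the multiplier on the residual in both $\mathcal{M}_{\mr{fem}}^{(1)}$ and $\mathcal{M}_{\mr{fem}}^{(2)}$ is $O(h^2)$ (namely $c_\dag^2 h^2$ and $\theta_{\mr{sz}}=c_{\mr{sz}}(0,1)^2 h^2$), the weight $h^2$ exactly absorbs the $h^{-2}$ produced by the inverse inequality. Collecting the two contributions, and using $\mr{osc}_{\tau_r}^2(f)\le (h_r/\pi)^2\|f-\Pi_r^1 f\|_{L_2(\tau_r)}^2$ together with quasiuniformity $h\simeq h_r$ to match the term $h^2\|f-\hat f\|_0^2$ against the oscillation sum, gives (\ref{002}) with $\mathbb{C}=\mathbb{C}(\Omega,c_\vartriangle)$; the cases $k=1,2$ differ only in the $O(h^2)$ constant and are handled identically.

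The main obstacle is precisely the control of $\mr{div}\,\mathbf{z}$: the $\mathbf{L}^2$-projection onto $\mathbf{W}_{h,2}(\Omega,\mr{div})$ does not commute with the divergence, so $\mr{div}\,\mathbf{z}$ cannot be read off directly and must be routed through the equilibrated flux $\mathbf{z}_{\mr{AV}}$ so that the nonvanishing part of the residual cancels, after which the otherwise dangerous $h^{-1}$ from the inverse inequality is neutralized by the $O(h)$ weight carried by the residual term. A secondary technical point is confirming $\mathcal{R}=\mathcal{R}_*$ under $\sigma\le\sigma_*$ so that (\ref{AiVe-0}) is available on all elements; should the borderline case force some elements into $\mathcal{R}\setminus\mathcal{R}_*$, one falls back on the full local indicator (\ref{yappi-1111}) of Theorem~\ref{Th:A-V} there, which only alters constants.
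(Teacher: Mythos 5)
Your argument is correct and is essentially the paper's own proof: the paper likewise takes the Ainsworth--Vejchodsky flux as the comparator $\mathbf{y}_\circ$, uses $\mathbf{L}^2$-projection optimality for the diffusion term, and controls the residual via the split $\mathrm{div}\,\mathbf{z}=\mathrm{div}\,\mathbf{y}_\circ+\mathrm{div}(\mathbf{z}-\mathbf{y}_\circ)$ together with the inverse inequality on the discrete difference, the $O(h^2)$ residual weight absorbing the $h^{-2}$. The only cosmetic difference is that the paper phrases the comparison abstractly through the bound $\mathcal{M}_{\mr{fem}}^{(k)}(\sigma,f,\mathbf{y}_\circ)\le\mathbb{C}_\circ[\,\vertiii{e_{\mr{fem}}}^2+\sum_r \mathrm{osc}_{\tau_r}^2(f)\,]$ rather than invoking the elementwise equilibration identity explicitly, which also sidesteps your $\mathcal{R}=\mathcal{R}_*$ caveat.
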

  \begin{proof}  Let us consider the majorant  ${\mc M^{(1)}}$.  Suppose,
  there exists ${\bf y}_\circ \in
   {\mbf W}_{h,\kappa}(\Omega, {\mr{div}}), \,\,\kappa\ge 1$,  for which the estimate  (\ref{002}) holds with $ {\bf z}={\bf y}_\circ $,  {\em i.e.},
  \begin{equation}\label{0021}
       {\mc M}_{\mr{fem}}^{(k)}(\sigma,f,{\bf y}_\circ)\le {\mbb C}_\circ \big[\,|\!\!\,\!|\!\!\,\!| e_{\mr{fem}}|\!\,\!\!|\!\,\!\!|^2+
       \sum_r \frac{h_r^2}{\pi^2}\int_{\tau_r} (f-\Pi_r^1 f)^2dx \big]\,.  
\end{equation}
  Let also ${\bf y} $ be orthogonal ${\bf L}^2$-projection of vector function  ${\bf z}_{\mr{fem}}$ upon  the space $
   {\mbf W}_{h,\kappa}(\Omega, {\mr{div}})$. If to use the inequality
     \begin{equation}\label{003}
        \|\nabla u_{\mr{fem}}-  {\bf y} \|^2_{{\bf L}^2(\Omega)}\le \|\nabla u_{\mr{fem}}-  {\bf y}_\circ \|^2_{{\bf L}^2(\Omega)}\,,
     \end{equation}
    and then  the inequality   (\ref{0021}),   we obtain
  \begin{equation}\label{004}
  \begin{array}{c}
       {\mc M}_{\mr{fem}}^{(1)}(\sigma,f,{\bf y})  \le 2 \Big\{ {\mc M}_{\mr{fem}}^{(1)}(\sigma,f,{\bf y}_\circ)  +
        ({\mr{c}}_\dag h)^2\|{\mr{div}} ({\bf y}_\circ- {\bf y})\|_{L_2(\Omega)}^2 \Big\}  \le  \\
 \vspace{-3mm}
 \\
     2\Big\{ {\mbb C}_\circ \big[\,|\!\!\,\!|\!\!\,\!| e_{\mr{fem}}|\!\,\!\!|\!\,\!\!|^2+
    \sum_r \frac{h_r^2}{\pi^2}\int_{\tau_r} (f-\Pi_r^1 f)^2dx \big] +
     ({\mr{c}}_\dag h)^2 \|{\mr{div}} ({\bf y}_\circ- {\bf y})\|_{L_2(\Omega)}^2 \Big\} \,
 \end{array}
 \end{equation}
   The difference ${\bf y}_\circ- {\bf y}$ belongs to the finite element space  ${\mbf W}_{h,\kappa}(\Omega, {\mr{div}})$ and, therefore,
    the inverse inequality is fulfilled. The use of the inverse inequality  and  (\ref{003}), (\ref{0021}) yields the bound
  \begin{equation}\label{005}
  \begin{array}{c}
   {\mr{c}}_\dag h \|{\mr{div}} ({\bf y}_\circ- {\bf y})\|_{L_2(\Omega)} \le {\mr{c}}_{\mr{i}}{\mr{c}}_\dag \| {\bf y}_\circ- {\bf y}\|_{L_2(\Omega)} \le {\mr{c}}_{\mr{i}}{\mr{c}}_\dag \big[\,\| {\bf y}_\circ- \nabla u_{\mr{fem}} \|_{L_2(\Omega)}  +
      \\
  \vspace{-3mm}
   \\
     +  \| \nabla u_{\mr{fem}} - {\bf y}\|_{L_2(\Omega)}  \big] \le  2{\mr{c}}_{\mr{i}}{\mr{c}}_\dag \big[\,\| {\bf y}_\circ- \nabla u_{\mr{fem}} \|_{L_2(\Omega)}\le 2{\mr{c}}_{\mr{i}}{\mr{c}}_\dag {\mbb C}_\circ^{1/2} \,|\!\!\,\!|\!\!\,\!| e_{\mr{fem}}|\!\,\!\!|\!\,\!\!|\,,
 \end{array}
 \end{equation}
  which together with(\ref{004}) means validity of  (\ref{002}) for $ {\bf z}={\bf y}$ under the condition of  existence of  ${\bf y}_\circ $
   with the pointed out property.
    \par To complete the proof for $k=1$ and $\kappa=2$ we note that Ainsworth  and Vejchodsky  \cite{AinsworthVejchodsky:2015}  suggested the algorithm for  evaluation  of such a flax
     ${\bf z}_{_{AV}}\in {\mbf W}_{h,2}(\Omega, {\mr{div}})$, that for ${\bf y}_\circ={\bf z}_{_{AV}}$ the inequality (\ref{0021}), obviously, holds.  The proof of the bound  (\ref{002})  for $k=2$ is only slightly different.
   \end{proof}

   \par Let $u$ and  $u_{\mr{fem}}$ are the exact and the finite element solutions to the problem in Theorem~\ref{Th:45} and $\hat{u}$ and
        $\hat{u}_{\mr{fem}}$ be the respective solutions of the same problem, but with the right part $\hat{f}$ instead of $f$. It is easy to notice that
        $\hat{u}_{\mr{fem}}=u_{\mr{fem}}$.  Therefore, as a consequence of (\ref{K-fem+1}) and (\ref{002}) for $\hat{e}_{\mr{fem}}=\hat{u}_{\mr{fem}}-\hat{u}$,  we have
        $$
                  |\!\!\,\!|\!\!\,\!|\hat{ e}_{\mr{fem}}|\!\,\!\!|\!\,\!\!|^2\le \frac{2}{1+{\mr{c}}_\dag^{2} h^2\sigma  } \, {\mc M}_{\mr{fem}}^{(1)}(\sigma,\hat{f},{\bf z})     \le  {\mbb C} \,  |\!\!\,\!|\!\!\,\!| \hat{ e}_{\mr{fem}}|\!\,\!\!|\!\,\!\!|^2\,,
        $$
  with the same $\bf z$ as in Theorem~\ref{Th:45}.  Similar inequalities hold  for  $ {\mc M}_{\mr{fem}}^{(2)}$.

\vspace{0.2cm}

\par  {\bf Acknowledgements}

   \par  The author expresses his sincere gratitude to doctor V. S.
          Kostylev
          for helpful discussions, instrumental for some results.

\end{document}